\documentclass[aop, preprint, 12pt]{imsart}

\RequirePackage[OT1]{fontenc}
\RequirePackage{amsthm,amsmath}
\RequirePackage[numbers]{natbib}
\RequirePackage[colorlinks,citecolor=blue,urlcolor=blue]{hyperref}
\usepackage{amsfonts}
\usepackage{color}
\usepackage{amssymb}
\usepackage[english]{babel}
\usepackage[T1]{fontenc}
\usepackage{graphicx}
\usepackage{subfigure}
\usepackage{latexsym}
\usepackage{amstext}
\usepackage{mathrsfs}
\usepackage{hyperref}
\usepackage{dsfont}
\usepackage{graphics}
\usepackage{enumerate}
\usepackage{paralist}
\usepackage{color}
\usepackage{fullpage}
\newtheorem{prop}{Proposition}[section]
\newtheorem{rem}[prop]{Remark}
\newtheorem{lem}[prop]{Lemma}
\newtheorem{theo}[prop]{Theorem}

\numberwithin{equation}{section}

%%%%%%%%%%%%%%%%%%%%%%%%%%%%%%%%%%%%%%%%%%%%%%%%%%%%%%%%%%%%%%%%
\newcommand{\beq}{\begin{eqnarray}}
\newcommand{\beqq}{\begin{eqnarray*}}
\newcommand{\eeq}{\end{eqnarray}}
\newcommand{\eeqq}{\end{eqnarray*}}

%%%%%%%%%%%%%%%%%%%%%%%%%%%%%%%%%%%%%%%%%%%%%%%%%%%%%%%%%%%%%%%%
% settings
%\pubyear{2005}
%\volume{0}
%\issue{0}
%\firstpage{1}
%\lastpage{8}
%\arxiv{arXiv:0000.0000}

\startlocaldefs
\numberwithin{equation}{section}
\theoremstyle{plain}

\endlocaldefs

\begin{document}

\begin{frontmatter}
\title{Stable windings at the origin}
\begin{center}
{\it Dedicated to the memory of Marc Yor}
\end{center}
\runtitle{Stable windings at the origin}
%\thankstext{T1}{Footnote to the title with the ``thankstext'' command.}

\begin{aug}
\author{\fnms{Andreas E. Kyprianou}\thanksref{t1}\ead[label=e1]{a.kyprianou@bath.ac.uk}}
\and
\author{\fnms{Stavros M. Vakeroudis}\ead[label=e2]{stavros.vakeroudis@gmail.com}}

%\author{\fnms{Third} \snm{Author}\thanksref{t1,m2}
%\ead[label=e3]{third@somewhere.com}
%\ead[label=u1,url]{http://www.foo.com}}

\thankstext{t1}{Supported by EPSRC grant EP/L002442/1}
%\thankstext{t2}{First supporter of the project}
%\thankstext{t3}{Second supporter of the project}
%\runauthor{F. Author et al.}

\affiliation{University of Bath and University of the Aegean}

\address{Department of Mathematical Sciences \\
University of Bath\\
 Claverton Down\\ Bath, BA2 7AY\\
 UK.
\printead{e1}
%\\
%\phantom{E-mail:\ }\printead*{e2}
}

\address{Department of Mathematics \\
Track: Statistics and Actuarial-Financial Mathematics \\
University of the Aegean\\
   83200 Karlovasi, Samos \\
    Greece.
\printead{e2}
%\\
%\printead{u1}
}
\end{aug}

\begin{abstract}\hspace{0.1cm}
 In 1996, Bertoin and Werner \cite{BeW96} demonstrated a functional limit theorem, characterising the windings of planar isotropic stable processes around the origin for large times, thereby complementing known results for planar Brownian motion.  The question of windings at small times can be handled using scaling. Nonetheless we examine the case of windings at the the origin using new techniques from the theory of self-similar Markov processes. This allows us to understand  upcrossings of (not necessarily symmetric) stable processes over the origin for large and small times in the one-dimensional setting.
\end{abstract}

\begin{keyword}[class=MSC]
\kwd[Primary ]{60J30}
\kwd{60G18}
%\kwd{60J15}
\kwd[; secondary ]{60J15}
\end{keyword}

\begin{keyword}
\kwd{Stable processes, winding numbers, self-similarity, Lamperti transform, duality, time change, Riesz--Bogdan--\.{Z}ak transform, upcrossings.}
%\kwd{\LaTeXe}
\end{keyword}

\end{frontmatter}

%%%%%%%%%%%%%%%%%%%%%%%%%%%%%%%%%%%%%%%%%%%%%%%%%%%%%%%%%%%%%%%%
\section{Introduction}
%%%%%%%%%%%%%%%%%%%%%%%%%%%%%%%%%%%%%%%%%%%%%%%%%%%%%%%%%%%%%%%%
Any planar stochastic process may be written in polar coordinates, say $(r_t\exp ({\rm i}\vartheta_t), t\geq 0)$.
The  angular part $(\vartheta_t, t\geq 0)$, is often referred to as its {\it winding number}, as its value modulo $2\pi$ tells us the number of times the process has wound (and unwound) around the origin.
%for example showing the existence of a law of large numbers or a functional limit theorem, is
Windings of 2-dimensional (planar) processes is a classical topic that has attracted the attention of several researchers over the last  decades.
The starting point is the case of planar Brownian motion where the conformal invariance property plays an important role in the analysis of windings.
For a planar Brownian motion $B$ starting from a point different from the origin, its continuous winding process is well-defined for large times. It  was initially proven by Spitzer \cite{Spi58} the following convergence in distribution:
\beq\label{Spi}
\frac{2}{\log t}\vartheta_{t} \overset{{d}}{\underset{t\rightarrow\infty}\longrightarrow} C_{1},
\eeq
where $C_{1}$ is a standard Cauchy variable.
Note that this result for planar Brownian motion can be extended to the finite dimensional distributions but not in the sense of functional weak convergence. % on the appropriate space.
Other subsequent important  contributions related to Spitzer's classical result can also be found in \cite{ItMK65,Dur84,PiY86,LGY90,BeW94, Vak11,VaY12}.

%This topic has recently regained attention because it turned out to be very fruitful in terms of applications in several domains, namely in Finance and in Biology.

Aside from its intrinsic interest, the issue of Brownian windings appear in various applications.  For example in considering the rotation of a planar polymer \cite{VYH11}
and other applications in neuroscience (see e.g. \cite{DiG13}).
In turn, this  has motivated further developments in the Brownian setting, see e.g. \cite{Vak11,VaY12}, as well as in the setting of complex-valued Ornstein-Uhlenbeck processes \cite{Vak15}.
Furthermore, in Financial Mathematics, exponential functionals of Brownian motion, which may be related to the windings of planar Brownian motion, are of special interest (see e.g. \cite{Yor01}).

A natural development in the theory of windings of stochastic processes  pertains to the mathematical exploration of  planar stable processes, whenever the winding process is well-defined. More recent work in this direction  has considered  the stable Kolmogorov process. That  is to say,  a 2-dimensional Markov process having as one of its coordinates a one-dimensional stable L\'{e}vy process and the second coordinate as the primitive of the first; see \cite{PrS15}.  However, the classical analogue of Spitzer's original winding result deals with the isotropic planar stable process. This was originally treated by Bertoin and Werner \cite{BeW96}. Their main result is stated below. %, with different approach offered in \cite{DoV12}.
%
%
%we note here that when $t\rightarrow\infty$ there is an analogue of Spitzer's Theorem \eqref{Spi} with $\sqrt{\log t}$ as normalization term, a centered Gaussian variable is obtained as the limit law, and the convergence in distribution being considered in the sense of Skhorohod in the appropriate space \cite{BeW96,DoV12}. A more recent result concerns windings of the stable Kolmogorov process which is a 2-dimensional Markov process having as its two coordinates a 1-dimensional stable L\'{e}vy process and its primitive \cite{PrS15}.
%
%
%In this paper, we deal with windings of isotropic planar  $\alpha$-stable process,  for $\alpha\in(0,2)$.  The, now, classical work of Bertoin and Werner \cite{BeW96} characterised the asymptotic winding properties of 2-dimensional isotropic stable processes.

\begin{theo}[Planar stable windings at $\infty$]\label{BWth}
Suppose that $(X_t, t\geq 0)$ is an isotropic planar $\alpha$-stable process, with $\alpha\in(0,2)$, that is issued from a point different from the origin. Write its  polar decomposition as $X_t = |X_t|\exp({\rm i}\theta_t)$, $t\geq 0$.
Then, there exists a constant $c\in(0,\infty)$ such that the process $(|r|^{-1/2}\theta_{\exp(rt)}, t\geq 0)$ converges weakly in the Skorokhod topology on $D([0,\infty),\mathbb{R})$ to $(\sqrt{c}B_t, t\geq 0)$ as $r\to\infty$, where $(B_t, t\geq 0)$ is a standard one-dimensional Brownian motion issued from the origin.
\end{theo}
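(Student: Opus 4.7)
The plan is to combine the Lamperti--Kiu skew-product representation of the planar isotropic stable process with a functional central limit theorem for the angular component and a law-of-large-numbers analysis of the Lamperti clock.

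\textbf{Step 1 (Lamperti--Kiu representation).} Since $X$ is a self-similar Markov process in $\mathbb{R}^2 \setminus \{0\}$ of index $1/\alpha$, it admits a skew-product representation
\begin{equation*}
X_t \;=\; \exp\bigl( \xi_{\varphi(t)} + \mathrm{i}\, \Theta_{\varphi(t)} \bigr), \qquad t \geq 0,
\end{equation*}
where $(\xi, \Theta)$ is a Lévy process in $\mathbb{R}^2$ and
$\varphi(t) = \inf\bigl\{ s \geq 0 : \int_0^s e^{\alpha \xi_u}\, du > t \bigr\}$
is the Lamperti clock. Isotropy gives the invariance $(\xi, \Theta) \stackrel{d}{=} (\xi, -\Theta)$, so $\Theta$ is symmetric (hence centred), and the winding process reads $\theta_t = \theta_0 + \Theta_{\varphi(t)}$.

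\textbf{Step 2 (Functional CLT for the angular Lévy process).} A single jump of $X$ alters $\arg X$ by a value in $(-\pi,\pi]$, so $\Theta$ is a symmetric Lévy process with bounded jumps. Consequently $c_0 := \mathrm{Var}(\Theta_1) \in (0,\infty)$, and the classical functional CLT for square-integrable centred Lévy processes yields, as $s \to \infty$,
\begin{equation*}
\bigl( s^{-1/2}\, \Theta_{s u} \bigr)_{u \geq 0} \;\Longrightarrow\; \bigl( \sqrt{c_0}\, B_u \bigr)_{u \geq 0}
\end{equation*}
weakly in the Skorokhod topology on $D([0,\infty),\mathbb{R})$.

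\textbf{Step 3 (Asymptotics of the Lamperti clock).} For $\alpha \in (0,2)$, the planar stable process is transient, so $\xi_s \to \infty$ and by the SLLN $\xi_s / s \to \mu$ for some $\mu \in (0,\infty)$. An elementary Laplace-type estimate for $\int_0^s e^{\alpha \xi_u}\, du$ then gives, in probability and uniformly on compacts in $t$,
\begin{equation*}
\frac{\varphi(e^{rt})}{r} \;\longrightarrow\; \frac{t}{\alpha \mu}, \qquad r \to \infty.
\end{equation*}

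\textbf{Step 4 (Composition).} Because the limit in Step 3 is deterministic (and continuous, strictly increasing in $t$), the joint Skorokhod convergence of $\bigl(s^{-1/2}\Theta_{s\,\cdot}\, ,\, \varphi(e^{s\,\cdot})/s\bigr)$ to $\bigl(\sqrt{c_0}\,B,\, t \mapsto t/(\alpha\mu)\bigr)$ follows from the marginal convergences in Steps 2 and 3. Applying a Skorokhod composition lemma (continuous mapping theorem for composition) gives
\begin{equation*}
r^{-1/2}\, \theta_{\exp(rt)} \;=\; r^{-1/2}\, \Theta_{\varphi(\exp(rt))} \;\Longrightarrow\; \sqrt{c_0}\, B_{t/(\alpha\mu)} \;\stackrel{d}{=}\; \sqrt{c_0/(\alpha\mu)}\, B_t,
\end{equation*}
which is the assertion with $c = c_0 / (\alpha \mu)$.

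\textbf{Main obstacle.} The subtle step is Step 4: transferring the functional CLT for $\Theta$ across the random Lamperti time change. Even though $\Theta$ and $\xi$ need not be independent, the fact that $\varphi(e^{r\cdot})/r$ has a \emph{deterministic} limit makes the joint convergence automatic, so the composition argument is clean. The remaining technical work is to identify the Lévy measures of $\xi$ and $\Theta$ induced by the isotropic stable jumps, both to verify $\mu \in (0,\infty)$ and to check $c_0 = \mathrm{Var}(\Theta_1) \in (0,\infty)$, which is where the self-similar Markov process machinery does the real work.
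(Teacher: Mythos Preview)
The paper does not prove Theorem~\ref{BWth}; it quotes it as the 1996 result of Bertoin and Werner \cite{BeW96} and uses it as input for Theorems~\ref{main} and~\ref{intoorigin}. So there is no ``paper's own proof'' to compare against.

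That said, your sketch is essentially the Bertoin--Werner argument itself (and the variant in Doney--Vakeroudis \cite{DoV12}): Lamperti representation~(\ref{Lamperti-type}), functional CLT for the angular L\'evy process $\rho$ (your $\Theta$), almost-sure linear growth of the clock $H_{\exp(rt)}$ in $r$, and a composition/continuous-mapping step. Two small points are worth flagging. First, in Step~3 you obtain the clock limit $1/(\alpha\mu)$ via a Laplace-type estimate on $\int_0^s e^{\alpha\xi_u}\,du$; Bertoin and Werner instead use the ergodic theorem for the stationary process $\tilde X_v = e^{-v/\alpha}X_{e^v}$ under $\mathbb{P}_0$ and arrive at the constant $\mathbb{E}_0[|X_1|^{-\alpha}]$ (see~(\ref{loggrowth}) and the surrounding discussion in this paper). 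These agree, but the equality $1/(\alpha\mu)=\mathbb{E}_0[|X_1|^{-\alpha}]$ is not immediate and your write-up should either derive it or switch to the ergodic argument. Second, your assertion in Step~4 that joint Skorokhod convergence follows ``automatically'' from a deterministic limit for one marginal is correct in spirit but needs the clock convergence to be almost sure (or at least in probability) and uniform on compacts, together with a suitable composition lemma such as Whitt's \cite{Whi80}; you allude to this, but it is exactly the place where the argument must be made precise.
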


A fundamental aspect of their approach was the representation of such processes, not as L\'evy processes, but as self-similar Markov processes. In particular, their analysis was driven by the so-called Lamperti representation of self-similar Markov processes; cf \cite{ACGZ}.  In the same setting, recent work of Doney and Vakeroudis \cite{DoV12} gives a different approach by invoking the continuity, with respect to the Skorokhod topology,  of the composition function (cf. \cite{Whi80}). They obtain the results of \cite{BeW96} as well as providing asymptotic winding results for small times, in the form of a functional limit theorem, when the stable process is issued from a point different from the origin and as a distributional limit when the stable process is issued from the origin.

What appears to be missing from this ensemble of results is a functional limit theorem in the spirit of Theorem \ref{BWth} at time zero when the stable process is issued from the origin. In order to discuss this further, we need to be a little careful with the notation $\theta: = (\theta_t, t\geq 0)$.  Indeed, whilst $\theta$ is a real-valued stochastic process, the quantity $\exp({\rm i}\theta_t) = X_t/|X_t|$, $t\geq 0$, only defines its value modulo $2\pi$. In fact,  $(\theta_t, t\geq 0)$ no longer makes sense when the process is issued from the origin as, by time $t$, the process has already undergone an infinite number of windings around the origin in both directions.  Instead we need to talk about angular displacements in relative, rather than absolute, terms. To this end, we shall henceforth work with $\theta_{[a,b]}$, $0<a\leq b<\infty$, which is well defined as the rotational displacement of $X$ over the time interval $[a,b]$. Of course in the setting that $X$ is issued from a point other than the origin, we can continue to write $\theta_t = \theta_{(0,t]}$.

Self-similarity informs us that, for all $c>0$,
$(\theta_{[s,t]}, 0<s\leq t)$ is equal in law to $(\theta_{[c^{-\alpha}s, c^{-\alpha}t]}, 0<s\leq t)$.
Setting $t = {\rm e}^r = c^{\alpha}$, for $r>0$, and $s = {\rm e}^{ur}$, for $u\in[0,1]$, we find
$(\theta_{[{\rm e}^{ru}, {\rm e}^r]}, 0\leq u\leq 1)$ is equal in law to $(\theta_{[{\rm e}^{r(u-1)}, 1]}, 0\leq u\leq 1)$.
As a consequence, Theorem \ref{BWth}  tells us that, in the sense of weak convergence with respect to the Skorokhod topology,
\begin{align}
\lim_{r\to\infty} r^{-1/2}(\theta_{({\rm e}^{-rv}, 1]}, 0\leq v\leq 1)&\stackrel{(law)}{=} \lim_{r\to\infty}r^{-1/2}(\theta_{({\rm e}^{ru}, {\rm e}^r]}, 0\leq u\leq 1) \notag\\&= \lim_{r\to\infty}r^{-1/2}(\theta_{[1,{\rm e}^r]} - \theta_{[1,{\rm e}^{ru}]}, 0\leq u\leq 1) \notag\\& = (\sqrt{c}(B_1 - B_u), 0\leq u\leq 1)\notag\\& \stackrel{(law)}{=}
(\sqrt{c}B_u, 0\leq u\leq 1)\label{sk}
\end{align}
With additional work, one can in principle piecewise extend the Skorokhod convergence from the interval $u\in[0,1]$ to $u\geq 0$ and this would result in the following theorem.

\begin{theo}[Planar stable windings at 0]\label{main}
Suppose that $(X_t, t\geq 0)$ is an isotropic planar $\alpha$-stable process, with $\alpha\in(0,2)$, that is issued from the origin. % and identify its polar decomposition just as in Theorem \ref{BWth}.
The process $(r^{-1/2}\theta_{(\exp(-rt),1]}, t\geq 0)$ converges weakly in the Skorokhod topology on $D([0,\infty),\mathbb{R})$ to $(\sqrt{c}B_t, t\geq 0)$ as $r\to\infty$, where   $(B_t, t\geq 0)$ is a standard one-dimensional Brownian motion issued from the origin and $c$ is the same constant appearing in Theorem \ref{BWth}.
\end{theo}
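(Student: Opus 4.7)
The plan is to reduce the statement to Skorokhod convergence on $[0,T]$ for an arbitrary fixed $T>0$, and then to invoke (\ref{sk}) via self-similarity and a time-reversal. Since the limiting process $\sqrt{c}B$ is continuous, convergence in $D([0,\infty),\mathbb{R})$ is equivalent to convergence of the restrictions in $D([0,T],\mathbb{R})$ for every $T>0$; fixing such a $T$, the goal reduces to showing
\[
(r^{-1/2}\theta_{(\exp(-rt),1]},\, 0\leq t\leq T) \stackrel{(d)}{\longrightarrow} (\sqrt{c}B_t,\, 0\leq t\leq T)
\]
in the Skorokhod topology on $[0,T]$.

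The main step is a self-similarity identity. Because $X$ is issued from the origin, the two-parameter family $\theta_{(s,t]}$ satisfies $(\theta_{(s,t]})_{s<t} \stackrel{\mathrm{law}}{=} (\theta_{(cs,ct]})_{s<t}$ for every $c>0$. Applying this with $c=e^{rT}$ and writing $s=rT$ yields
\[
(r^{-1/2}\theta_{(\exp(-rt),1]},\, 0\leq t\leq T) \stackrel{\mathrm{law}}{=} \bigl(\sqrt{T}\,s^{-1/2}\theta_{(\exp(s(1-t/T)),\exp(s)]},\, 0\leq t\leq T\bigr).
\]
The right-hand side is nothing other than the $[0,1]$-indexed process $w\mapsto s^{-1/2}\theta_{(\exp(sw),\exp(s)]}$ of (\ref{sk}), composed with the affine time-reversal $w=1-t/T$ and rescaled by $\sqrt{T}$. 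By (\ref{sk}) this $[0,1]$-indexed process converges in Skorokhod on $[0,1]$ to $\sqrt{c}\tilde{B}$ for a standard Brownian motion $\tilde{B}$; continuity of the limit upgrades Skorokhod convergence to uniform convergence on $[0,1]$, and the continuous mapping theorem applied to the affine time-reversal transfers the convergence to $D([0,T],\mathbb{R})$. A short calculation of means and covariances identifies the resulting Gaussian limit in law with $(\sqrt{c}B_t,\, 0\leq t\leq T)$.

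The main obstacle is likely to be the justification of (\ref{sk}) itself at the process level, since Theorem \ref{BWth} is stated for $X$ issued from a non-zero point whereas here $X$ starts at the origin. This can be handled by applying Theorem \ref{BWth} to the shifted process $(X_{1+u},\, u\geq 0)$, conditionally on $X_1\neq 0$ (which holds almost surely), and noting that the limiting Brownian motion does not depend on the starting position, so the convergence passes to the unconditional law. A secondary technical point is that Theorem \ref{BWth} literally yields convergence of $\theta_{(1,1+\exp(ru)]}$ rather than $\theta_{(1,\exp(ru)]}$, but the discrepancy is a winding over a single unit of time at large times and vanishes under the $r^{-1/2}$ scaling (via a further application of self-similarity). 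Once these points are settled, convergence in $D([0,T])$ for each $T>0$ yields convergence in $D([0,\infty))$ automatically, in view of the continuity of the limit.
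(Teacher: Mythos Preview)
Your proposal is correct in outline and can be made rigorous, but it is \emph{not} the route the paper takes. In fact, the paper explicitly acknowledges your approach just before stating Theorem~\ref{main}: the display (\ref{sk}) already establishes the result on $[0,1]$ via self-similarity, and the paper remarks that ``with additional work, one can in principle piecewise extend the Skorokhod convergence from the interval $u\in[0,1]$ to $u\geq 0$.'' Your argument is precisely that additional work: scale by $c=e^{rT}$ to pull $[0,T]$ back to $[0,1]$, apply (\ref{sk}), and use the continuous mapping theorem (legitimate because the limit is continuous and the time-reversal $w\mapsto 1-t/T$ is then a uniform-topology continuous map). Your handling of the passage from Theorem~\ref{BWth} (nonzero start) to $\mathbb{P}_0$ via conditioning on $X_1$ and the Markov property is the right idea; the residual discrepancy between $\theta_{(1,\exp(ru)]}$ and $\theta_{(1,1+\exp(ru)]}$ does need a short tightness/uniformity argument near $u=0$, but this is routine.

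The paper deliberately avoids this argument. Instead it proves Theorem~\ref{intoorigin} directly and obtains Theorem~\ref{main} as a corollary: using Nagasawa duality (Lemma~\ref{reverse}) it identifies the time-reversal of $(X,\mathbb{P}_0)$ from the last exit of a ball with the conditioned process $X^\circ$, then writes $\theta^\circ_{\tau^\circ-s}=\rho^\circ_{\varphi^\circ_s}$ via the Lamperti--Kiu-type decomposition, proves an almost-sure ergodic limit $\varphi^\circ_{e^{-r}}/r\to\upsilon_\alpha$ using the stationary Ornstein--Uhlenbeck-type process $\tilde X^\circ$, and finally invokes the continuity of composition in the Skorokhod topology together with Theorem~\ref{BWth} and Lemma~\ref{rhoxi}. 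The payoff of the paper's longer route is methodological: it furnishes the duality and time-change machinery that transfers to the one-dimensional upcrossing results (Theorem~\ref{main2}), where the pure scaling argument of (\ref{sk}) has no obvious analogue because the limits there are almost-sure rather than distributional. Your argument, by contrast, is shorter and more elementary for the planar statement in isolation, but it does not provide that bridge.
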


In this article, we would like to explore a completely new approach to stable windings that appeals to the intuition of Markov duality. In particular we want to understand the behaviour of stable processes as they wind out of the origin, as they wind in towards the origin (when conditioned to approach the origin continuously), as they wind to infinity and the pathwise relationship between the three. Although we start our analysis with planar stable processes, we see this familiar domain as a training ground from which we can  learn how to transplant the technology of Markov duality in two-dimensions into an analogous setting for one-dimension stable processes. Specifically, we would like to understand  upcrossings of the origin in one-dimension.

To see the intimate connection, we note that  a stable process $X$ in $d$-dimensions can always be expressed in the form $X_t = (|X_t|, {\rm Arg}(X_t))\in [0,\infty)\times\mathbb{S}_{d-1}$ , where ${\rm Arg(X_t)} = X_t/|X_t|$, $t\geq 0$, and  $\mathbb{S}_{d-1}: = \{x\in\mathbb{R}^d: |x| =1\}$.  What amounts to a single winding of $X$ around the origin for $d= 2$, or equivalently a single winding of  $({\rm Arg}(X_t), t\geq 0)$ around $\mathbb{S}_1$, corresponds to a sojourn $-1\rightarrow1\rightarrow -1$ in $\mathbb{S}_0$ for  $({\rm Arg}(X_t), t\geq 0)$ when $d=1$. Noting that, for every upcrossing of the origin, there is a subsequent downcrossing, it becomes clear that windings in two-dimensions is extremely closely related to upcrossings in one-dimension.
Winding behaviour into and out of the origin for the two-dimensional stable process is of particular interest in relation to the setting of upcrossings in the one-dimensional case on account of the fact that, for the latter, the origin is no longer polar when $\alpha\in(1,2)$. Moreover, as we shall shortly see, understanding a way of relating windings at $\infty$ to windings at $0$, other than appealing to the distributional scaling exploited in \eqref{sk}, affords us the opportunity to work more directly with the almost sure results that naturally appear in one dimensional upcrossings, rather than functional distributional convergence.

\medskip

Let us be technically more precise about some of the objects referred to in the previous paragraph.  In the planar setting, let  $\mathcal{G}_t : =\sigma(X_u: u\leq t)$, $t\geq 0$, and, for all $t\geq 0$, $A\in\mathcal{G}_t$,
\begin{equation}
\mathbb{P}^\circ_x(A, t<\tau^{\{0\}}): = \lim_{\epsilon\downarrow0}\mathbb{P}_x(A , t< \tau^{\{0\}}| \tau^{(0,\epsilon)}<\infty),
\qquad |x|>0,
\label{circ}
\end{equation}
where $\tau^{(0,\epsilon)}: =\inf\{s>0: |X_s|<\epsilon\}$ and $\tau^{\{0\}}: = \inf\{t>0 : X_t = 0\}$. The process $(X, \mathbb{P}^\circ_x)$, $x\neq 0$, is a self-similar  Markov process with zero as an absorbing state which can reasonably be called the planar stable process conditioned to be continuously absorbed at the origin. See \cite[Theorem 2.1]{KRS15} for related computations. We have the following result describing windings into the origin. % (out of which Theorem \ref{main} follows directly as a very simple corollary).
\begin{theo}[Winding into and out of the origin]\label{intoorigin} Suppose that $(X^\circ_t, t\leq \tau^\circ)$ has the law of an isotropic planar stable process conditioned to continuously absorb at the origin issued from a point which is randomised according to the distribution $\mathbb{P}_0(X_{\ell_a-}\in {\rm d}x)$, $|x|<a$, for some $a>0$, where
\[
\ell_a = \sup\{s\geq 0: |X_s|\leq a\}
\text{ and }\tau^\circ : = \inf\{t> 0 : X^\circ_t = 0\}.
\]
 In polar form, write $X^\circ_t = |X^\circ_t|\exp({\rm i}\theta^\circ_t)$, $t\geq 0$.
Then, $(\theta_{(t,1]}, t\leq 1)$ under $\mathbb{P}_0$ and $(\theta^\circ_{\tau^\circ - t}, t\leq \tau^\circ)$ have the same  asymptotic behaviour as $t\downarrow0$ in the sense that
$
(r^{-1/2}\theta^\circ_{\tau^\circ - \exp(-rt)}, t\geq 0)
$
converges weakly in the Skorokhod topology on $D([0,\infty),\mathbb{R})$ to $(\sqrt{c}B_t, t\geq 0)$ as $r\to\infty$.
\end{theo}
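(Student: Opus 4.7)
The plan is to deduce the result from Theorem~\ref{main} by means of a time-reversal duality for self-similar Markov processes, which also explains why the initial distribution $\mathbb{P}_0(X_{\ell_a-}\in\mathrm{d}x)$ appears in the hypothesis. The key pathwise identity I would establish is that, for any fixed $a>0$,
\begin{equation*}
\bigl(X^\circ_{\tau^\circ - s},\,0\leq s\leq\tau^\circ\bigr) \stackrel{d}{=} \bigl(X_s,\,0\leq s\leq\ell_a\bigr) \text{ under }\mathbb{P}_0,
\end{equation*}
with matching final-time behaviour: $X^\circ_{\tau^\circ}=0=X_0$ and $X^\circ_0\stackrel{d}{=}X_{\ell_a-}$. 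This is a Nagasawa-type duality: the conditioned process $X^\circ$ is a Doob $h$-transform of the stable process killed at $\{0\}$ by the appropriate Riesz/Martin kernel, and this transformation is precisely what makes $X^\circ$ the time-reversal, from a last-exit time of a ball, of the un-conditioned stable process issued from $0$. The cleanest route to a rigorous proof is to pass to the Lamperti--Kiu MAP representation of the isotropic stable process (cf.~\cite{ACGZ}), where the radius becomes a real-valued L\'evy process after a time change and the duality reduces to the classical last-passage time-reversal for L\'evy processes conditioned to drift to $-\infty$; the angular modulator is handled using the isotropy.

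Granted the duality, the rotational displacement of $X^\circ$ over any interval $[\tau^\circ - t_2,\,\tau^\circ - t_1]$ with $0<t_1<t_2<\tau^\circ$ coincides in law, up to a sign, with the rotational displacement $\theta_{[t_1,t_2]}$ of $X$ under $\mathbb{P}_0$. Hence the rescaled reversed winding $(r^{-1/2}\theta^\circ_{\tau^\circ-\exp(-rt)},\,t\geq 0)$ is distributionally identified, modulo a possible overall sign and an almost surely bounded base-point correction killed by $r^{-1/2}$, with $(r^{-1/2}\theta_{[\exp(-rt),\,\ell_a]},\,t\geq 0)$ under $\mathbb{P}_0$. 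Since $\ell_a>0$ almost surely and the winding is invariant under radial rescaling of the path, a standard self-similarity argument (shifting $r$ by an $O(1)$ amount depending on $\ell_a$, which is absorbed by the $r^{-1/2}$ normalisation) reduces the task to the already-established convergence in Theorem~\ref{main} of $(r^{-1/2}\theta_{(\exp(-rt),1]},\,t\geq 0)$ to $(\sqrt{c}\,B_t,\,t\geq 0)$ in the Skorokhod topology. The sign is immaterial because the Gaussian limit is symmetric.

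The main obstacle is the duality itself, in particular the precise identification of the starting law of the reversed conditioned process with the last-exit entrance law $\mathbb{P}_0(X_{\ell_a-}\in\mathrm{d}x)$, together with the compatibility of the angular modulator under reversal. The L\'evy-process side is classical; the extra work is in lifting it to the full planar process via the Lamperti--Kiu transform, which requires verifying that the dual modulator under time reversal produces an isotropic stable law. Once the duality is in place, the asymptotic step is routine, as tightness and finite-dimensional convergence both transfer immediately from Theorem~\ref{main} through the pathwise identity.
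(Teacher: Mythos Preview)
Your duality step is correct and coincides with the paper's Lemma~\ref{reverse} and Remark~\ref{rem}, though the paper establishes it by verifying Nagasawa's condition~\eqref{nagasawacheck} directly on the planar semigroup, using the explicit Riesz potential~\eqref{R}, rather than by lifting a L\'evy last-passage time-reversal through the Lamperti representation; the direct route avoids precisely the angular-modulator lifting you flag as the main obstacle. The more substantive difference is the logical order: you deduce Theorem~\ref{intoorigin} from Theorem~\ref{main}, whereas the paper does the reverse, obtaining Theorem~\ref{main} as a corollary of Theorem~\ref{intoorigin}. Its direct argument writes $\theta^\circ_{\tau^\circ-s}=\rho^\circ_{\varphi^\circ_s}$ with $\varphi^\circ_s=H^\circ_{\tau^\circ-s}$, proves the almost sure limit $\varphi^\circ_{\exp(-r)}/r\to\upsilon_\alpha$ via the ergodic theorem for the stationary process ${\rm e}^{v/\alpha}X^\circ_{\tau^\circ-{\rm e}^{-v}}$ (which the duality identifies with ${\rm e}^{v/\alpha}X_{{\rm e}^{-v}}$ under $\mathbb{P}_0$), shows $(\xi^\circ,\rho^\circ)\stackrel{d}{=}(-\xi,\rho)$ via the Riesz--Bogdan--\.Zak transform (Lemma~\ref{rhoxi}), and then transfers the Skorokhod convergence from the Bertoin--Werner Theorem~\ref{BWth} through continuity of the composition map. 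Your route is shorter when Theorem~\ref{main} is taken as given (e.g.\ via the scaling sketch~\eqref{sk}), but be aware that within the paper it would be circular; moreover, the paper's route is chosen deliberately, as it supplies the template---ergodic limit of the time-change combined with a law of large numbers for the modulator---that is reused verbatim in the one-dimensional upcrossing Theorem~\ref{main2}, where no ready-made analogue of Theorem~\ref{main} exists to invoke.
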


The last part of the above theorem can be seen as a corollary to Theorem \ref{main}. However, we shall prove the aforesaid statement directly (not as a matter of folly, but because we need instruction for part (ii) of the next theorem in the one-dimensional case) and hence, as far as this article is concerned, Theorem \ref{main} is a corollary to Theorem \ref{intoorigin}.

\medskip

Now suppose that $X$ is a one-dimensional stable process with two-sided jumps and with index $\alpha\in(0,2)$.
Let ${U}_{[a,b]}$, $0<a\leq b<\infty$, be the number of upcrossings in the time interval $[a,b]$. That is to say
\[
U_{[a,b]} = \sum_{a\leq  s\leq b}\mathbf{1}_{(X_s>0, \, X_{s-}<0)}.
\]
We write $U_t = U_{(0,t]}$, $t\geq 0$, when it is appropriately defined.  (The reader will again note that when $X_0 = 0$, there are infinite crossings of the origin and hence this would be an example of where the notation $U_t$ does not make sense.)
We are interested in upcrossings both as time tends to zero and  to infinity in the case $\alpha\in (0,1]$ (in which regime the origin is polar) and as time tends to zero and to the first hitting time of the origin when $\alpha\in(1,2)$ (in which regime the origin is visited almost surely).  We prove strong laws of large numbers for the upcrossing count, which are reminiscent of the scaling that appears in planar windings of stable processes and Brownian motion. %The results split into the cases that $\alpha\in(0,1]$ and $\alpha\in(1,2)$, thereby distinguishing the cases that the process hits the origin with probability 0 or 1, respectively.

\begin{theo}[Stable upcrossings]\label{main2}Suppose that $X$ is a one-dimensional stable process with two-sided jumps and with index $\alpha\in(0,2)$.
\begin{enumerate}
\item[(i)] If $\alpha\in(0,1]$, then when $X$ is issued from a point other than the origin,
\begin{equation}
\lim_{t\to\infty}\frac{{U}_{t}}{\log t}=
\frac{\sin(\pi\alpha\rho)\sin(\pi\alpha\hat\rho)}{\alpha\pi\sin(\pi\alpha)}.
\label{oo}
\end{equation}
almost surely, with the understanding that the constant on the right-hand side above is equal to infinity when $\alpha = 1$.

\item[(ii)] If $\alpha\in(0,1]$, then when $X$ is issued from the origin,
\begin{equation}
 \lim_{t\to0}\frac{{U}_{[t,1]}}{\log (1/t)} = \frac{\sin(\pi\alpha\rho)\sin(\pi\alpha\hat\rho)}{\alpha\pi\sin(\pi\alpha)}
 \label{o}
\end{equation}
almost surely.

\item[(iii)] If $\alpha\in(1,2)$, then, when $X$ is issued from a point other than the origin,
\[
\lim_{t\to0}\frac{{U}_{\tau^{\{0\}}-t}}{\log (1/t)}=  \frac{\sin(\pi\alpha\rho)\sin(\pi\alpha\hat\rho)}{\alpha\pi|\sin(\pi\alpha)|}
\]
almost surely, where $\tau^{\{0\}}  = \inf\{t> 0: X_t = 0\}$.
\end{enumerate}

\end{theo}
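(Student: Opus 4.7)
My plan is to exploit the Lamperti--Kiu representation of a two-sided stable process as a time change of a Markov additive process (MAP). Specifically, for $X$ started away from the origin, there is a MAP $((\xi_s, J_s), s\geq 0)$ taking values in $\mathbb{R}\times\{-1,+1\}$ such that
\[
X_t = J_{\tau(t)}\exp(\xi_{\tau(t)}),\qquad \tau(t)=\inf\Bigl\{s>0:\int_0^s {\rm e}^{\alpha\xi_u}{\rm d}u>t\Bigr\}.
\]
Under this coding, the upcrossings of $X$ over the origin are exactly the $-1\to+1$ jumps of the modulator $J$. Writing $N_s$ for the number of such transitions up to MAP-time $s$, we have $U_t=N_{\tau(t)}$. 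Since $J$ is an irreducible continuous-time Markov chain on two states, a standard renewal argument provides a strong law $N_s/s\to\kappa$ almost surely, where $\kappa$ is expressible in terms of the stationary distribution and transition rates of $J$; these rates are known in closed form from the Caballero--Chaumont--Kuznetsov--Pardo--Kyprianou parametrisation of the MAP associated with a stable process.

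For part (i) with $\alpha\in(0,1)$, the stable process is transient, so $|X_t|\to\infty$ and $\xi_s\to\infty$ almost surely. The L\'evy-type structure of $\xi$ yields $\xi_s/s\to\mu>0$, and the exponential functional estimate $\int_0^s{\rm e}^{\alpha\xi_u}{\rm d}u\sim(\alpha\mu)^{-1}{\rm e}^{\alpha\xi_s}$ gives $\tau(t)\sim(\alpha\mu)^{-1}\log t$ almost surely. Combining with the SLLN for $N$ produces
\[
\lim_{t\to\infty}\frac{U_t}{\log t}=\frac{\kappa}{\alpha\mu},
\]
and a direct computation with the explicit MAP parameters identifies this ratio with $\sin(\pi\alpha\rho)\sin(\pi\alpha\hat\rho)/(\alpha\pi\sin(\pi\alpha))$. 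The boundary case $\alpha=1$ is handled separately: now $\xi$ oscillates, $\tau(t)$ grows strictly faster than logarithmically, and since $N_{\tau(t)}\to\infty$, the ratio in \eqref{oo} diverges, in agreement with the stated convention.

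Part (ii) is obtained by time reversal, using the one-dimensional analogue of Theorem \ref{intoorigin}. There is a pathwise coupling identifying the path $(X_t,\,0<t\leq 1)$ under $\mathbb{P}_0$ restricted to $[t,1]$ with the time-reversal of a segment of the process conditioned to be continuously absorbed at the origin, issued from an appropriate entrance law. This conditioned process has its own Lamperti--Kiu lift, obtained from $(\xi, J)$ by a Cram\'er-type $h$-transform acting on the L\'evy driver while leaving the modulator $J$ unchanged; hence the transition rates of $J$ (and the local rate of $-1\to+1$ jumps) are preserved. Applying the analogue of part (i) to the conditioned reversed process and comparing path segments yields \eqref{o} with an identical constant.

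Part (iii) follows the same template, with the Riesz--Bogdan--\.{Z}ak transform playing the role of the conditioning. For $\alpha\in(1,2)$ the origin is hit in finite time almost surely, and this transform identifies the reversal of $(X_t,\,0<t\leq\tau^{\{0\}})$ with the forward path of a stable process issued from the origin. Upcrossings over $[\tau^{\{0\}}-t,\tau^{\{0\}}]$ thus recast as upcrossings over $[0,t]$ of a process issued from $0$, analysed by the same MAP. The only arithmetic change is the sign of the linear drift: the relevant L\'evy driver now satisfies $\xi_s/s\to\mu<0$, so $\tau(t)\sim(\alpha|\mu|)^{-1}\log(1/t)$, producing the $|\sin(\pi\alpha)|$ in the denominator. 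The principal technical obstacle across parts (ii) and (iii) is the rigorous setup of these time-reversal identities and the verification that upcrossing counts are preserved under them; once this path-level correspondence is in place, the transfer of a forward SLLN into a reversed one is routine, and matching the constants to $\sin(\pi\alpha\rho)\sin(\pi\alpha\hat\rho)/(\alpha\pi|\sin(\pi\alpha)|)$ reduces to a computational check against the Wiener--Hopf factorisation of the MAP.
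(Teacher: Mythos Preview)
Your overall architecture---Lamperti--Kiu representation, the renewal SLLN for the modulator $N_s/s\to\kappa$, and an asymptotic for the time change $\varsigma_t$---matches the paper's. Where you diverge is in how you control the time change. The paper does \emph{not} use $\xi_s/s\to\mu$; instead it rewrites $\varsigma_{\exp t}=\int_0^t|\tilde X_v|^{-\alpha}\,{\rm d}v$ for the stationary Ornstein--Uhlenbeck-type process $\tilde X_v={\rm e}^{-v/\alpha}X_{{\rm e}^v}$ under $\mathbb{P}_0$ and applies the ergodic theorem to obtain $\varsigma_{\exp t}/t\to\mathbb{E}_0[|X_1|^{-\alpha}]$, then computes this moment via Zolotarev. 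Your route, inverting $\int_0^s{\rm e}^{\alpha\xi_u}{\rm d}u$ through the SLLN for the MAP ordinate, is legitimate and in some ways cleaner: it works directly under $\mathbb{P}_x$, $x\neq0$, and for part (iii) it dispenses entirely with duality, because $\int_s^\infty{\rm e}^{\alpha\xi_u}{\rm d}u=t$ together with $\xi_s/s\to\mu<0$ already gives $\varsigma_{\tau^{\{0\}}-t}\sim(\alpha|\mu|)^{-1}\log(1/t)$ almost surely. The paper, by contrast, proves (iii) by Nagasawa time reversal to the process conditioned to avoid the origin, computes $\mathbb{E}^\circ_0[|X^\circ_1|^{-\alpha}]$ via an entrance-law identity, and then has to argue that the randomised initial law $\nu^\circ_a$ is absolutely continuous to pass from ``almost every $x$'' to ``every $x$''. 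Your approach sidesteps all of that; the price is that identifying $\kappa/(\alpha\mu)$ with the stated constant requires differentiating the Perron eigenvalue of $\boldsymbol F(z)$ at $z=0$, which is a different (but comparable) computation.

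That said, two points in your write-up are wrong as stated. First, in part (ii) the Cram\'er $h$-transform does \emph{not} leave the modulator unchanged: comparing $\boldsymbol F(0)$ with $\boldsymbol F^\circ(0)$ shows the off-diagonal rates are swapped ($q\leftrightarrow\hat q$). This is harmless for the final constant because the renewal limit in \eqref{slln} is symmetric in $q,\hat q$, but your justification is incorrect. Second, and more seriously, your description of part (iii) conflates two different transformations. The Riesz--Bogdan--\.Zak transform is a spatial inversion plus time change, not a time reversal; and the time reversal of $(X_t,\,t\le\tau^{\{0\}})$ from $\tau^{\{0\}}$ is the stable process \emph{conditioned to avoid the origin} issued from $0$, not an ordinary stable process. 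Indeed, an ordinary stable process with $\alpha\in(1,2)$ issued from $0$ has $\tau^{\{0\}}=0$ almost surely and accumulates infinitely many upcrossings on every interval $[0,\varepsilon)$, so ``upcrossings over $[0,t]$ of a process issued from $0$'' would be ill-defined. The good news is that your final sentence---working directly with the MAP of $X$ itself, using $\mu<0$---is a correct and self-contained argument that needs neither reversal nor Riesz--Bogdan--\.Zak; you should drop the reversal narrative and present it that way.
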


\noindent In the above theorem, when $\alpha\in(1,2)$ and $X$  is issued from the origin, the reader may expect to see a result for ${U}_{[t,1]}$ as $t\to0$. However, the question of counting upcrossings  does not make sense any more. For this parameter regime,  because $X$ is issued from the origin, $\tau^{\{0\}} = 0$   almost surely. Moreover,  over each time horizon $[0,\varepsilon)$, $\varepsilon>0$, $X$ enjoys a countable infinity of excursions from the origin; and within each excursion there are a countable infinity of upcrossings.

Our computations appeal to three different path transformations to  represent the entrance law of the stable process when issued from the origin. The first is the so-called Riesz--Bogdan--\.{Z}ak transform introduced in \cite{BoZ06} which gives the law of the stable process when passed through the spatial Kelvin transform with  an additional time change.  The latter is equivalent to performing the second transformation that we use, which is the Doob $h$-transform of $X$ that corresponds to conditioning the stable process to continuously absorb at the origin; cf \cite{KRS15}.  The third path transformation appeals to Markov duality in the sense of Nagasawa \cite{N}. In particular we use that the stable process emerging from the origin is dual, in the appropriate sense of time reversal, to the aforementioned case of  a stable process conditioned to absorb continuously at the origin.

\medskip

The rest of the paper is organised as following. In the next section, we discuss the three path decompositions that we use as the key novelty in our analysis. In Section \ref{3} we give the proof of Theorem \ref{main} by passing first through the proof of Theorem \ref{intoorigin}. This establishes the line of reasoning that allows us  in Section \ref{1dwindings} to prove Theorem \ref{main2}.

%%%%%%%%%%%%%%%%%%%%%%%%%%%%%%%%%%%%%%%%%%%%%%%%%%%%%%%%%%%%%%%%

\section{Stable processes}
We consider an isotropic planar stable L\'{e}vy process $X=(X_{t},t\geq0)$ with stability index $\alpha\in(0,2)$ and probabilities $\mathbb{P}_x$, $x\in \mathbb{R}^2$.
For more details on L\'{e}vy and stable processes see e.g. \cite{B,Don07,Kbook}.
Recall that, following Lamperti \cite{Lam72}, in general we say that a Markov process $X$ taking values in $\mathbb{R}^{2}$, with semigroup $P_t$, $t\geq 0$,
is isotropic if its transition satisfies
\beq
P_{t}(\phi(x),\phi(G))=P_{t}(x,G),\qquad x\in \mathbb{R}^2, G\in\mathcal{B}(\mathbb{R}^2),
\eeq
for any $\phi$ in the group of orthogonal transformations on $\mathbb{R}^2$.
If $\langle\cdot,\cdot\rangle$ stands for the Euclidean inner product, then an isotropic planar stable process has characteristic exponent given by the relation
\[
\mathbb{E}_0\left[\exp\left({\rm i}\langle{z},X_{t}\rangle\right)\right]=\exp\left(-t |{z}|^{\alpha}\right),\qquad t\geq0, {z}\in\mathbb{C}.
\]
Recall also that, as L\'evy processes, isotropic planar stable processes are transient, meaning that, almost surely,
\[
\lim_{t\to\infty}|X_t| = \infty.
\]
Moreover, they are  polar in $\mathbb{R}^2$, in the sense that, for all $x\in\mathbb{R}^2$,
\[
\mathbb{P}_0(X_t = x\text{ for some }t> 0) = 0.
\]

Planar stable processes  are also self-similar Markov processes, i.e. for all $c>0$ and $x\neq 0$,
\begin{equation}
(cX_{c^{-\alpha}t}, t\geq 0 ) \text{ under } \mathbb{P} \text{ is equal in law to } (X, \mathbb{P}_{cx}).
\label{stablescaling}
\end{equation}
As such, they may also be represented via a space-time transformation of a Markov additive process. To be more precise, it can be shown (see e.g. \cite{BeW96,Chy06,DoV12} and the references therein) that
\begin{equation}
X_t = \exp\{\xi_{H_t} + {\rm i}\rho_{H_t}\}, \qquad t\geq 0,
\label{Lamperti-type}
\end{equation}
where
\beq
H_t = \inf\{s\geq 0 : \int_0^s {\rm e}^{\alpha\xi_u}{\rm d}u>t\} = \int_0^t |X_s|^{-\alpha}{\rm d}s
\label{formulaH}
\eeq
and  $(\xi, \rho) = ((\xi_t,\rho_t): t\geq 0)$ is such that $\rho$ is a symmetric L\'evy process and $\xi$ is a L\'evy process correlated to  $\rho$. This means that $(\xi,\rho)$ is a strong Markov process with probabilities $\mathbf{P}_{x,y}$, $x,y\in\mathbb{R}^2$ such that $( \xi_{t+s} - \xi_t, \rho_{t+s} - \rho_t )$, $s\geq 0$ is independent of $\sigma((\xi_u, \rho_u): u\leq s)$ and equal in law to $(\xi, \rho)$ under $\mathbf{P}_{0,0}$; see for example \cite{CPR}.
(Note that, for a general planar self-similar Markov process, one would normally have that the pair  $(\xi,\rho)$ is a Markov additive process such that $\rho$ modulates the increments of $\xi$, however, in this special setting, we have the additional property that the pair is a L\'evy process.)
\\
%{\color{red}{I THINK IT SHOULD BE:}} This means that $(\xi,\rho)$ is a strong Markov process with probabilities $\mathbf{P}_{x,y}$, $x,y\in\mathbb{R}^2$ such that, given $\rho_t =y$,
%{\color{red}{$( \xi_{t+s} - \xi_t, \rho_{t+s})$, $s\geq 0$}} is independent of {\color{red}{$\sigma((\xi_u, \rho_u): u\leq t)$}} and equal in law to $(\xi, \rho)$ under $\mathbf{P}_{0,y}$. See for example {\color{red}{\cite{CPR}.}}

The isotropic property of $X$ implies  that $(|X_t|, t\geq 0)$ is a positive self-similar Markov process (pssMp);  see for example Chapter 13 of \cite{Kbook}. In particular, when one considers $\xi$ as a lone process, without information about $\rho$, then it is a L\'evy process. With an abuse of notation, we denote its probabilities by $\mathbf{P}_x$, $x\in\mathbb{R}$.
The fact that $\lim_{t\to\infty}|X_t| =\infty$ (due to transience) implies that $\lim_{t\to\infty}\xi_t = \infty$ almost surely. In Theorem 7.1 of Caballero et al. \cite{CPP}, the characteristic exponent of $\xi$ is derived. Indeed, for $z\in\mathbb{R}$,
\[
-\log\mathbf{E}_0[{\rm e}^{{\rm i}z\xi_1}] = :\Psi(z) =2^\alpha\frac{\Gamma(\frac{1}{2}(-{\rm i}z +\alpha ))}{\Gamma(-\frac{1}{2}{\rm i}z)}\frac{\Gamma(\frac{1}{2}({\rm i}z +2))}{\Gamma(\frac{1}{2}({\rm i}z +2-\alpha))}.
\]
It is straightforward to see that this exponent can be analytically extended to the Laplace exponent $\psi_\xi(u): = -\Psi(-{\rm i}u)$ for $-2<u<\alpha$, which is convex, having roots at $u=0$ and $u = \alpha -2$ and exploding at $u = -2$ and $\alpha$.

\bigskip

In this article, the technique  we will develop  predominantly concerns the relationship between $(X,\mathbb{P}_0)$ and the singular law of $X$ conditioned to be continuously absorbed at the origin as defined in (\ref{circ}). The latter was constructed in Theorem 16 of \cite{CR}. Note  that $\psi_\xi(\alpha-2) = 0$, which is needed to apply the aforesaid Theorem.
%we can reproduce the result in the following form. Let  $\mathcal{G}_t : =\sigma(X_u: u\leq t)$, $t\geq 0$. For all $t\geq 0$, $A\in\mathcal{G}_t$   and $x\neq 0$,
%\[
%\mathbb{P}^\circ_x(A, t<\tau^{\{0\}}): = \lim_{\epsilon\downarrow0}\mathbb{P}_x(A , t< \tau^{\{0\}}| \tau^{(0,\epsilon)}<\infty),
%\qquad |x|>0,\]
%where $\tau^{(0,\epsilon)}: =\inf\{s>0: |X_s|<\epsilon\}$ and $\tau^{\{0\}} = \inf\{t>0 : X_t = 0\}$,  defines the law of a Markov process with zero as an absorbing state.

As well as being described through the limiting procedure \eqref{circ},  it is also the case that the law of a stable process conditioned to continuously absorb at the origin can also be captured by a Doob $h$-transform. For all $t\geq 0$, $x\neq 0$, we have
\begin{equation}
\left. \frac{{\rm d}\mathbb{P}^\circ_x}{{\rm d}\mathbb{P}_x}\right|_{\mathcal{G}_t} = \frac{|X_t|^{\alpha-2}}{|x|^{\alpha-2}}\mathbf{1}_{(t<\tau^{\{0\}})}.
\label{conditioned}
\end{equation}
See \cite[Theorem 2.1]{KRS15} for related computations.
This change of measure ensures that $(X, \mathbb{P}^\circ_x)$, $x\in\mathbb{R}^2\backslash \{0\}$ is again an isotropic self-similar Markov process and therefore has a decomposition in the spirit of (\ref{Lamperti-type}); cf. \cite{CPR}. Let us write $X^\circ = (X^\circ_t, t\geq 0)$ to mean a canonical version of $(X, \mathbb{P}^\circ_x)$, $x\in\mathbb{R}^2\backslash \{0\}$. Moreover, we shall write its polar decomposition as
\begin{equation}
X^\circ_t = \exp\{\xi^\circ_{H^\circ_t} + {\rm i}\rho^\circ_{H^\circ_t}\}, \qquad t\leq \tau^\circ,
\label{comparewith}
\end{equation}
where
\[
H^\circ_t = \inf\{s\geq 0 : \int_0^s {\rm e}^{\alpha\xi^\circ_u}{\rm d}u>t\} = \int_0^t |X^\circ_s|^{-\alpha}{\rm d}s
\]
and $\tau^\circ = \inf\{s>0 : X_s^\circ = 0\}$.
Once again, the process $(\xi^\circ, \rho^\circ)$ is a Markov additive process, where $\rho^\circ$ is the underlying modulation to $\xi^\circ$. Isotropy also ensures that the process $|X^\circ| := (|X^\circ_t|, t\geq 0)$ is again a positive self-similar Markov process and $\xi^\circ$, when observed as a lone process, is a L\'evy process. On account of the Doob $h$-transform of the law of $X^\circ$ with respect to $X$, one easily verifies that it constitutes an Esscher transform with respect to $\xi$. Moreover, the characteristic exponent, $\Psi^\circ$ of $\xi^\circ$ satisfies
\begin{equation}
\Psi^\circ(z) = \Psi(z - {\rm i}(\alpha-2))  = 2^\alpha\frac{\Gamma(\frac{1}{2}(-{\rm i}z +2 ))}{\Gamma(-\frac{1}{2}({\rm i}z +\alpha-2))}\frac{\Gamma(\frac{1}{2}({\rm i}z +\alpha))}{\Gamma(\frac{1}{2}({\rm i}z))} = \Psi(-z),
\label{-x}
\end{equation}
for $z\in\mathbb{R}$. That is to say, $\xi^\circ$ is equal in law to $-\xi$. In fact, one can go a little further than this observation as the next result confirms.

\begin{lem}\label{rhoxi}
The pair $(\xi^\circ, \rho^\circ)$ is equal in law to the pair $(-\xi, \rho)$.
\end{lem}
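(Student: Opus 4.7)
The plan is to reduce the joint statement to a pathwise identification via the Riesz--Bogdan--\.{Z}ak transform advertised in the introduction. This transform asserts that, if $K(x) = x/|x|^2$ denotes the Kelvin inversion in $\mathbb{R}^2$ and $\sigma$ is the right inverse of $\eta(t) = \int_0^t |X_s|^{-2\alpha}\,{\rm d}s$, then the process $Y_t := K(X_{\sigma(t)})$ has the same law as $X^\circ$; cf \cite{BoZ06}. Since $X^\circ$ is already uniquely determined by the Doob $h$-transform (\ref{conditioned}), it then suffices to read off the Lamperti--Kiu MAP underlying $Y$ and show it equals $(-\xi, \rho)$.

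For the second step, I observe that $K(re^{{\rm i}\theta}) = r^{-1}e^{{\rm i}\theta}$, so that Kelvin inversion acts in Lamperti--Kiu log-polar coordinates as $(\xi, \rho) \mapsto (-\xi, \rho)$. Using the Lamperti representation (\ref{Lamperti-type}) of $X$, this gives $Y_t = \exp\{-\xi_{H_{\sigma(t)}} + {\rm i}\rho_{H_{\sigma(t)}}\}$. Comparing with the Lamperti representation of $Y$ under the guess that its MAP is $(-\xi, \rho)$, one must verify that the corresponding Lamperti clock $H^Y_t := \int_0^t |Y_s|^{-\alpha}\,{\rm d}s$ coincides with $H_{\sigma(t)}$. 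This reduces to a short change-of-variables computation chaining the two time changes together: substituting $u = H_v$ in $\int_0^{H_{\sigma(t)}} e^{-\alpha\xi_u}\,{\rm d}u$ produces $\int_0^{\sigma(t)} |X_v|^{-2\alpha}\,{\rm d}v = \eta(\sigma(t)) = t$, exactly the identity characterising the Lamperti clock of $(-\xi, \rho)$. Uniqueness of the Lamperti--Kiu correspondence then forces the MAP of $Y$ to be $(-\xi, \rho)$.

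Combining the two identifications $X^\circ \stackrel{d}{=} Y$ and the fact that $Y$ has MAP $(-\xi, \rho)$ yields $(\xi^\circ, \rho^\circ) \stackrel{d}{=} (-\xi, \rho)$, proving the lemma. The main obstacle I anticipate is a minor but fiddly one: keeping the two simultaneous time changes (the Lamperti clock and the Bogdan--\.{Z}ak clock $\sigma$) straight while chaining them, in particular respecting the interplay between the Lamperti compensator $|X_v|^{-\alpha}$ and the Bogdan--\.{Z}ak compensator $|X_v|^{-2\alpha}$. A useful sanity check is that the marginal identity $\xi^\circ \stackrel{d}{=} -\xi$ was already derived via characteristic exponents in (\ref{-x}), consistent with what the joint statement predicts.
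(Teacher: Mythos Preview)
Your proposal is correct and follows essentially the same route as the paper: both invoke the Riesz--Bogdan--\.{Z}ak transform to identify $X^\circ$ with $K(X_{\eta_\cdot})$, observe that Kelvin inversion sends $(\xi,\rho)\mapsto(-\xi,\rho)$ in log-polar coordinates, and then verify that the composite time change $H\circ\eta$ is the Lamperti clock associated with $-\xi$. The only cosmetic difference is that the paper obtains the identity $\int_0^{H\circ\eta_t} e^{-\alpha\xi_u}\,{\rm d}u = t$ by differentiating $H\circ\eta$ via the chain rule, whereas you obtain it by the change of variables $u=H_v$; both computations are equivalent.
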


Before turning to the proof of this lemma, we must cite the recent and beautiful  result of Bogdan and \.{Z}ak  \cite{BoZ06} (building on earlier work of Riesz, see the discussion in Section 3 of \cite{BGR}), which is based on the Kelvin transform. At the heart of the aforesaid result is  the conformal mapping $K:\mathbb{R}^2\backslash\{0\} \mapsto\mathbb{R}^2\backslash\{0\}$ which inverts space through the unit circle. Specifically,
$$Kx=\frac{x}{|x|^{2}}, \qquad x\in\mathbb{R}^2\backslash\{0\}.$$

\begin{theo}[Riesz--Bogdan--\.{Z}ak transform]\label{BoZ}Let
\beq
\eta_t=\inf\{s\geq0:\int^{s}_{0}\frac{{\rm d}u}{|X_{u}|^{2\alpha}}>t\}.
\eeq
The process $(KX_{\eta_t},t\geq0)$ under $\mathbb{P}_x$ is equal in law to  $X^{\circ}$ issued from $X^\circ_0=Kx$.
\end{theo}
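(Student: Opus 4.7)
The plan is to identify $Y_t := KX_{\eta_t}$ under $\mathbb{P}_x$ with $X^\circ$ under $\mathbb{P}^\circ_{Kx}$. Since $\eta$ is the inverse of a continuous additive functional of $X$ and $K$ is a bijection of $\mathbb{R}^2\setminus\{0\}$, the process $Y$ is automatically strong Markov with $Y_0 = Kx$, matching the initial condition of $X^\circ$. It therefore suffices to identify the two laws as Markov processes, for example via their semigroups, generators, or finite-dimensional distributions.

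My preferred route exploits the Lamperti-type decomposition (\ref{Lamperti-type}). Substituting $X_t = \exp(\xi_{H_t} + {\rm i}\rho_{H_t})$ and using that $K(re^{{\rm i}\theta}) = r^{-1}e^{{\rm i}\theta}$ gives $KX_t = \exp(-\xi_{H_t} + {\rm i}\rho_{H_t})$. For the time change, the substitution $s = H_u$, under which $du = e^{\alpha\xi_s}\,ds$, yields
\[
t = \int_0^{\eta_t} |X_u|^{-2\alpha}\,du = \int_0^{H_{\eta_t}} e^{-\alpha\xi_s}\,ds,
\]
so that $H_{\eta_t}$ is precisely the Lamperti inverse additive functional in (\ref{formulaH}) with $\xi$ replaced by $-\xi$. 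Hence
\[
Y_t = \exp\bigl(-\xi_{H_{\eta_t}} + {\rm i}\rho_{H_{\eta_t}}\bigr),
\]
exhibiting $Y$ as an isotropic self-similar Markov process whose Lamperti--Kiu underlying pair is $(-\xi,\rho)$. Comparing with (\ref{comparewith}), equality in law with $X^\circ$ would follow provided that the underlying pair $(\xi^\circ,\rho^\circ)$ of $X^\circ$ is equal in law to $(-\xi,\rho)$, which is the content of Lemma~\ref{rhoxi}.

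The main obstacle is therefore this joint identification $(\xi^\circ,\rho^\circ) \stackrel{d}{=} (-\xi,\rho)$, which for a self-contained proof of the theorem must be established without recourse to the Bogdan--\.{Z}ak transform itself. The marginal equality $\xi^\circ \stackrel{d}{=} -\xi$ is immediate from (\ref{-x}), since the Doob $h$-transform with $h(x) = |x|^{\alpha-2}$ acts as an Esscher transform on the radial pssMp $|X|$. The joint statement can then be obtained by exploiting the rotation-invariance of $h$: the change of measure in (\ref{conditioned}) depends only on $|X_t|$ and therefore does not alter the conditional law of the angular component given the radial component, and a direct characteristic-function computation on the pair $(\xi,\rho)$ would confirm that $\rho^\circ$ is distributed as $\rho$ jointly with the sign-flipped $\xi$. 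Alternatively, one could bypass the Lamperti apparatus entirely and argue \`a la Bogdan--\.{Z}ak by verifying, through a direct computation on the stable jump kernel $|z|^{-2-\alpha}\,dz$, a Riesz-type Kelvin identity for the generator of the form
\[
|x|^{2\alpha}\,\mathcal{L}(f\circ K)(x) = h(Kx)^{-1}\,\mathcal{L}(h f)(Kx);
\]
coupled with a standard martingale-problem uniqueness argument, this identifies $Y$ with $X^\circ$ directly and sidesteps any circularity.
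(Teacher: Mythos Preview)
The paper does not actually prove Theorem~\ref{BoZ}; it is quoted from \cite{BoZ06} and then \emph{used} to prove Lemma~\ref{rhoxi}. So there is no ``paper's own proof'' to compare against, and your primary route is circular in this paper's logical order: you reduce the theorem to the identification $(\xi^\circ,\rho^\circ)\stackrel{d}{=}(-\xi,\rho)$, i.e.\ to Lemma~\ref{rhoxi}, but in the paper Lemma~\ref{rhoxi} is deduced \emph{from} Theorem~\ref{BoZ} via exactly the time-change computation you write down. You do flag this, which is good, but the two escape routes you sketch are not yet proofs.

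For route~(a), the observation that the Radon--Nikodym derivative in \eqref{conditioned} is $\sigma(|X_s|:s\le t)$-measurable only tells you that the conditional law of $\rho^\circ$ given $\xi^\circ=f$ coincides with the conditional law of $\rho$ given $\xi=f$. To conclude $(\xi^\circ,\rho^\circ)\stackrel{d}{=}(-\xi,\rho)$ you would further need that the conditional law of $\rho$ given $\xi=f$ equals the conditional law of $\rho$ given $\xi=-f$, and rotation-invariance of $h$ (or of $X$) does not obviously deliver this; isotropy gives $(\xi,\rho)\stackrel{d}{=}(\xi,-\rho)$, not the $\xi\leftrightarrow -\xi$ symmetry you need. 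So ``a direct characteristic-function computation on the pair $(\xi,\rho)$'' is doing all the work here, and that computation is not supplied. Route~(b), the generator/Kelvin identity on the jump kernel together with martingale-problem uniqueness, is precisely the Bogdan--\.{Z}ak argument in \cite{BoZ06}; it is the right way to make the statement self-contained, but as written it is only an outline. In short: your Lamperti reduction is correct and matches what the paper uses in the reverse direction, but a stand-alone proof of Theorem~\ref{BoZ} still has to go through either the explicit joint exponent of $(\xi,\rho)$ or the Riesz--Kelvin kernel identity, neither of which you have carried out.
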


Our objective here, however, is to establish a different connection between $(X,\mathbb{P}_0)$ to the process $X^\circ$ (see Lemma \ref{reverse} below), the proof of which will use the above result.

\begin{proof}[Proof of Lemma \ref{rhoxi}]
 From the Riesz--Bogdan--\.{Z}ak representation of $X^\circ$, we can say that
$
(X^\circ_t, t\leq \tau^{\{0\}} )
$
with $X^\circ_0 =x\neq 0$, is equal in law to
\begin{equation}
\exp\{-\xi_{H\circ\eta_t} + {\rm i}\rho_{H\circ\eta_t}\}, \qquad t\leq I_\infty,
\label{compare1}
\end{equation}
where $\xi_0 = -\log x$ and $I_\infty = \int_0^\infty {\rm e}^{-\alpha \xi_u}{\rm d}u$. Note that, for $t\leq I_\infty$,
\[
\int_0^{\eta_t} \frac{1}{|X_{s}|^{2\alpha}}{\rm d}s = t \text{ and } \int_0^{H_t}{\rm e}^{\alpha\xi_u}{\rm d}u = t,
\]
which, in turn, tells us that ${\rm d}\eta_t/{\rm d}t = |X_{\eta_t}|^{2\alpha}$ and ${\rm d}H_t/{\rm d}t = {\rm e}^{-\alpha\xi_{H_t}} = |X_t|^{-\alpha}$. It now follows that
\[
\frac{{\rm d}H\circ\eta_t}{{\rm d}t} = \left.\frac{{\rm d}H_s}{{\rm d}s}\right|_{s = \eta_t}\frac{{\rm d}\eta_t}{{\rm d}t} = |X_{\eta_t}|^{\alpha} = {\rm e}^{\alpha \xi_{H\circ\eta_t}},\qquad t\leq I_\infty,
\]
which is to say that
\begin{equation}
H\circ\eta_t = \inf\{s>0 : \int_0^s {\rm e}^{-\alpha \xi_u}{\rm d}u>t\}, \qquad t\leq I_\infty.
\label{compare2}
\end{equation}
Now comparing \eqref{compare1}, \eqref{compare2} and \eqref{-x} with \eqref{comparewith} one deduces that $(\xi^\circ, \rho^\circ)$ is equal in law to the pair $(-\xi,\rho)$.
\end{proof}

As alluded to above, we somehow want to relate the process $(X,\mathbb{P}_0)$ to the process $X^\circ$.% To this end, we define $U_1 = \sup\{s\geq 0: |X_s|< 1\}$.
\begin{lem}\label{reverse}
%Suppose that $X^\circ_0=x$ with probability proportional to $|x|^{\alpha-2}${\rm d}x, for $x\in\mathbb{R}^2$. Then the process $(X^{\circ}_{\tau^{\{0\}}-t}$, $0\leq t\leq \tau^{\{0\}})$, is a Markov process with transition semi-group equal to that of $X$.
%Conditionally on $\{X_{U_1 -} =x\}$, where $|x|<1$, the process $(X_{(U_1-t)-}, t\leq U_1)$ under $\mathbb{P}_0$ is equal in law to $X^\circ$ issued from $X^\circ_0 = x$.
%The process ${\revXcirc}_{\!\!\!\! t}: = X^\circ_{(\tau^\circ-t)-}$, $t\leq \tau^\circ$ has the property that, on $\{s<t<\tau^\circ\}$, the transitions of ${\revXcirc}$ from $s$ to $t$ are that of a temporally homogeneous Markov process with seimigroup equal to that of $X$.
For each $a>0$, recall $\ell_a = \sup\{s\geq 0: |X_s|\leq a\}$.
Conditionally on the event $\{X_{\ell_a -} =x\}$, where $|x|<a$, the process $(X_{(\ell_a-t)-}, t\leq \ell_a)$ under $\mathbb{P}_0$ is equal in law to $X^\circ$ issued from $X^\circ_0 = x$.
\end{lem}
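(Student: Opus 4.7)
The identity in Lemma \ref{reverse} is a path-level refinement of the distributional identity $(\xi^\circ,\rho^\circ)\stackrel{d}{=}(-\xi,\rho)$ of Lemma \ref{rhoxi}. My plan is to prove it by a Nagasawa-type time-reversal at the coterminal time $\ell_a$, identifying the reversed semigroup with the Doob $h$-transform that characterises $X^\circ$ via \eqref{conditioned}.

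Because $0$ is polar for $X$ and classical time-reversal identities are stated for a non-degenerate starting point, I would first establish the analogous statement under $\mathbb{P}_{x_0}$ for $x_0\in\mathbb{R}^2\setminus\{0\}$ of small modulus, and then pass to the limit $x_0\to 0$. Under $\mathbb{P}_{x_0}$, the isotropic planar stable process is transient and, by symmetry, self-dual with respect to Lebesgue measure, with Riesz potential kernel $U(y,z)=c_\alpha|y-z|^{\alpha-2}$. Nagasawa's reversal theorem applied at $\ell_a$ then gives that, conditional on $X_{\ell_a-}=x$, the reversed process $(X_{(\ell_a-t)-},\,0\leq t\leq\ell_a)$ under $\mathbb{P}_{x_0}$ is Markov, starts from $x$, and has semigroup equal to the Doob $h_{x_0}$-transform of $X$ (self-duality allowing us to replace the dual process by $X$ itself) with $h_{x_0}(y)=c_\alpha|y-x_0|^{\alpha-2}$.

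Letting $x_0\to 0$, one has $h_{x_0}(y)\to c_\alpha|y|^{\alpha-2}$ pointwise, and by \eqref{conditioned} the corresponding $h$-transform of $X$ is precisely $X^\circ$. Moreover, $\mathbb{P}_{x_0}$ converges to $\mathbb{P}_0$ via the entrance law of the planar stable process from $0$, so that the distribution of $X_{\ell_a-}$ under $\mathbb{P}_{x_0}$ converges weakly to $\mathbb{P}_0(X_{\ell_a-}\in\cdot)$. Commuting the time-reversal with this limit then yields the statement of Lemma \ref{reverse}.

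The main obstacle will be the technical interchange of time-reversal and the $x_0\to 0$ limit, since the reversed paths live on the random interval $[0,\ell_a]$ with a randomised endpoint; one needs continuity/tightness estimates uniform in $x_0$ for the joint law of $(\ell_a, X_{\ell_a-})$. A cleaner route, more aligned with the self-similarity apparatus used elsewhere in the paper, is to work directly at the level of the Lamperti MAP $(\xi,\rho)$: reversal of $(\xi,\rho)$ from the last passage time $\sup\{s:\xi_s\leq\log a\}$ of $\xi$ below $\log a$, combined with the symmetry of $\rho$ and \eqref{-x}, identifies the reversed MAP as $(-\xi,\rho)\stackrel{d}{=}(\xi^\circ,\rho^\circ)$ via Lemma \ref{rhoxi}. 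Undoing the Lamperti time change then recovers the required identity at the level of $X$ and $X^\circ$.
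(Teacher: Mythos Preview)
Your overall strategy---Nagasawa time-reversal at the coterminal time $\ell_a$, followed by identification of the reversed semigroup with the Doob $h$-transform defining $X^\circ$---is exactly the paper's approach. However, you take an unnecessary detour, and that detour creates the very gap you flag as an obstacle.

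The paper applies Nagasawa's theorem \emph{directly} under $\mathbb{P}_0$. Your premise that ``classical time-reversal identities are stated for a non-degenerate starting point'' is not a constraint here: Nagasawa's framework takes an arbitrary initial measure $\mu$ and works with the occupation measure $\varpi({\rm d}x)=\int\mu({\rm d}a)R(a,{\rm d}x)$. With $\mu=\delta_0$ one obtains $\varpi({\rm d}x)=R(0,{\rm d}x)=C(\alpha)|x|^{\alpha-2}{\rm d}x$ from the explicit Riesz potential, and the duality condition to verify is
\[
p_t(x,{\rm d}y)\,|x|^{\alpha-2}{\rm d}x \;=\; p_t^\circ(y,{\rm d}x)\,|y|^{\alpha-2}{\rm d}y.
\]
Using the definition of $p_t^\circ$ via \eqref{conditioned}, this reduces to $p_t(x,{\rm d}y){\rm d}x=p_t(y,{\rm d}x){\rm d}y$, which is nothing but the classical self-duality of the (symmetric) L\'evy semigroup. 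That is the entire proof.

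By routing through $\mathbb{P}_{x_0}$ with $x_0\neq 0$ and then sending $x_0\to 0$, you manufacture a limit-interchange problem (tightness of $(\ell_a,X_{\ell_a-})$, convergence of the $h_{x_0}$-transforms on random time horizons) that simply does not arise if you apply Nagasawa at $\delta_0$. Your alternative MAP route has the same defect in disguise: the Lamperti representation \eqref{Lamperti-type} is only available when $X_0\neq 0$, so under $\mathbb{P}_0$ the pair $(\xi,\rho)$ is not defined at time $0$ (formally $\xi_0=-\infty$), and reversing $(\xi,\rho)$ from $\sup\{s:\xi_s\le\log a\}$ again forces you into an entrance-from-$-\infty$ argument or a limiting procedure. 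Drop the detour; set $\mu=\delta_0$ in Nagasawa's theorem and check the duality relation directly.
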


\begin{proof}
We appeal to a line of reasoning that resonates with the proof of Proposition 1 of \cite{CP} and Theorem 2 of \cite{BS}. Like the aforementioned proofs, our proof is fundamentally based on Nagasawa's theory of time reversal for Markov processes; see \cite{N}.
Specifically, Theorem 3.5 of Nagasawa \cite{N} tells us that the time-reversion of $(X, \mathbb{P}_0)$ from its last passage time $\ell_a$   is that of a time-homogenous Markov process and, moreover, its semigroup agrees with that of $X^\circ$. However, this conclusion only holds  subject to certain conditions which must first be checked and we dedicate the remainder of the proof to verifying what is needed.

It turns out that, once we have verified one of the main conditions stipulated amongst those listed in A.3.1-A.3.3 in Nagasawa \cite{N}, the rest are trivial  to verify. To deal with this principal condition, let us introduce some notation. For $x,y\in\mathbb{R}^2$, we shall write $R(x, {\rm d}y)$ for the resolvent of $X$.
It has been known for a long time (see for example p. 543 of \cite{BGR}), that
\begin{equation}
R(x, {\rm d}y) = C(\alpha)|x-y|^{\alpha -2}, \qquad x,y\in\mathbb{R}^2,
\label{R}
\end{equation}
where $C(\alpha)$ is a constant depending on the index of stability $\alpha$ that is of no interest here.
%Similarly, let us introduce the resolvent of the process $X^\circ$, say $R^\circ(x,{\rm d}y)$, for $x,y\in\mathbb{R}^2\backslash\{0\}$ and note that
%\begin{equation}
%R^\circ(x,{\rm d}y)=\frac{|y|^{\alpha -2}}{|x|^{\alpha -2}}R(x, {\rm d}y) , \qquad x,y\in\mathbb{R}^2\backslash\{0\}.
%\label{doob}
%\end{equation}
Taking account of the fact that $X$ is issued from the origin, paraphrasing
the principle condition of Nagasawa \cite{N}, we need to check is that, with %with $\nu({\rm d}x) = \delta_{0}({\rm d}x)$, and with
 \[
\varpi({\rm d}x): = \int_{\mathbb{R}^2}\delta_{0}({\rm d}a)R(a, {\rm d}x)= R(0, {\rm d}x) = |x|^{\alpha -2}{\rm d}x, \qquad x\in \mathbb{R}^2\backslash\{0\},
\]
we have
\begin{equation}
p_t(x, {\rm d}y)\varpi({\rm d}x)=p^{\circ}_t(y, {\rm d}x)\varpi({\rm d}y) , \qquad   x,y\in\mathbb{R}^2\backslash\{0\}, t\geq 0.
\label{nagasawacheck}
\end{equation}
Here, $p_t(x,{\rm d}y)$ and $p_t^\circ(y, {\rm d}x)$ represent the transition semigroups of $X$ and $X^\circ$.

We now see that \eqref{nagasawacheck} requires us to check that
\[
p_t(x, {\rm d}y)|x|^{\alpha-2}{\rm d }x = \frac{|x|^{\alpha-2}}{|y|^{\alpha-2}}p_t(y, {\rm d}x) |y|^{\alpha-2}{\rm d}y,\qquad x,y\in\mathbb{R}^2.
\]
Hence, we require that $p_t(x, {\rm d}y){\rm d }x =p_t(y, {\rm d}x){\rm d}y$, $x,y\in\mathbb{R}^2$. However, this is nothing more than the classical duality property for L\'evy process semi-groups (and in particular for isotropic stable process semi-groups).
\end{proof}

\begin{rem}\rm\label{rem}
The consequence of this last lemma is that we can study the windings of $X$ backwards from last exit from the ball of radius $a$ by considering instead the windings of $X^\circ$ as $t\uparrow\tau^\circ$ from a randomised initial position, which we can control by conditioning on the distribution of aforesaid last exit point. However, because of the nature of the scaling in the winding functional limit theorem and that only finite winding can occur over finite time horizons,  knowledge of  backward winding of  $X$ from $\ell_a$ to the origin is sufficient to tell us about backward winding of $X$ from 1 to the origin. Indeed,
\[
\theta_{[t,1]} = \theta_{[t,\ell_a]} + \theta_{(\ell_a,1]}\mathbf{1}_{(\ell_a\leq 1)} - \theta_{(1,\ell_a]}\mathbf{1}_{(\ell_a>1)},
\]
and hence, when scaling by $r^{-1/2}$ such as is proposed in Theorems \ref{main} and \ref{intoorigin}, the difference $|\theta_{[t,1]} - \theta_{[t,\ell_a]} |$ becomes irrelevant.\hfill$\diamond$
\end{rem}

%At this point we may note that Theorem \ref{intoorigin} is a consequence  of Theorem \ref{main}. Nonetheless, for the purpose of instruction, with a view to proving Theorem \ref{main2}, we derive   instead Theorem \ref{main2} by making use of the known asymptotic behaviour of $\rho_t$ as $t\uparrow\infty$ as this will inform the proof of Theorem \ref{main2}.
%We continue to pursue these objectives in the next two sections.

%\beq\label{theta}
%\theta^{\circ}_{t}=\mathrm{Im}\left(\int^{t}_{0}\frac{{\rm d}X_{s}^{\circ}}{X_{s}^{\circ}}\right)
%=\mathrm{Im}\left(\int^{t}_{0}\frac{{\rm d}(KX_{s}^{\circ})}{KX_{s}^{\circ}}\right)
%\stackrel{(law)}{=}\mathrm{Im}\left(\int^{t}_{0}\frac{{\rm d}\hat{X}_{\gamma(s)}^{\circ}}{\hat{X}_{\gamma(s)}^{\circ}}\right)=\hat{\theta}^{\circ}_{t},
%\eeq
%where
%\beq
%\gamma^{\circ}(s)=\int^{s}_{0}|X_{s}^{\circ}|^{-\alpha}ds,
%\eeq
%and $\hat{X}^{\circ}$ stands for the Doob $h$-transform of $X^{\circ}$.
%Note that the equality in law above is also true in the level of processes. \\ \\
%Similarly for $\theta^{\emptyset}_{t}$........................

%%%%%%%%%%%%%%%%%%%%%%%%%%%%%%%%%%%%%%%%%%%%%%%%%%%%%%%%%%%%%%%%
\section{The winding process}\label{3}
%%%%%%%%%%%%%%%%%%%%%%%%%%%%%%%%%%%%%%%%%%%%%%%%%%%%%%%%%%%%%%%%
%We turn now our interest to the study of the windings of the processes introduced previously and more precisely of $X$.
Lemma \ref{reverse} and Remark \ref{rem} thereafter tells us that   studying winding backwards to the origin, $\theta_{[t,1]}$ as $t\downarrow0$, is equivalent to studying the forward winding $\theta^\circ_{\tau^\circ-s}$ as $s\downarrow0$, under $\mathbb{P}^\circ_{\nu_a}: = \int_{|x|<a}\nu_a({\rm d} x)\mathbb{P}^\circ_x$, where $\nu_a({\rm d}x): = \mathbb{P}_0(X_{\ell_a -}\in{\rm d} x)$, where $a>0$ is a fixed constant.
%, where, for each $s\leq \tau^\circ$,  $\theta^\circ_{\tau^\circ-s}$ is the reversed winding of $X^\circ$ from its absorption at the origin. Note, however that back winding from absorption  by an amount of time $s$ brings the angular displacement to the same position as forward winding from time zero to time $\tau^\circ-s$. That is to say,   $\theta^\circ_{[\tau^\circ-s, \tau^\circ]} = \theta_{\tau^\circ - s}$ on $\{s\leq \tau^\circ\}$.
We are therefore interested in a functional limit theorem for $(\theta^\circ_{\tau^\circ-s}, s\leq \tau^\circ)$ as $s\downarrow 0$.

From the  representation (\ref{comparewith}) %and Lemma \ref{rhoxi}
we have that, on $\{s<\tau^\circ\}$
\begin{equation}
\theta^\circ_{\tau^\circ-s} = \rho^\circ_{H^\circ_{\tau^\circ - s}},
\label{backwards}
\end{equation}
where
\[
\int_0^{H^\circ_{\tau^\circ - s}} {\rm e}^{\alpha\xi^\circ_u}{\rm d}u = \tau^\circ - s.
\]
For convenience, let us write $\varphi^\circ_s = H^\circ_{\tau^\circ - s}$, providing $s\leq \tau^\circ$. Note in particular that  $\varphi^\circ_{\tau^\circ} = 0$ and that $\varphi^\circ_{0} = \infty$. We also have that
\[
\int_{\varphi^\circ_s}^\infty  {\rm e}^{\alpha\xi^\circ_u} {\rm d}u =\int_0^\infty  {\rm e}^{\alpha\xi^\circ_u} {\rm d}u -\int_0^{H^\circ_{\tau^\circ - s}} {\rm e}^{\alpha\xi^\circ_u} {\rm d}u = \tau^\circ - (\tau^\circ - s) = s.
\]
Differentiating, we see that, on $\{s<\tau^\circ\}$,
\[
\frac{{\rm d}\varphi^\circ_s}{{\rm d}s} = - {\rm e}^{-\alpha\xi^\circ_{\varphi^\circ_s}}  = -|X^\circ_{\tau^\circ-s}|^{-\alpha}
\]
and hence, after integrating, since $\varphi^\circ_{\tau^\circ} = 0$, on $\{t<\tau^\circ\}$,
\[
\varphi^\circ_t = \varphi^\circ_t - \varphi^\circ_{\tau^\circ} = \int_t^{\tau^\circ}|X^\circ_{\tau^\circ-s}|^{-\alpha}{\rm d}s. %= \int_0^{\tau^\circ-t}|X^\circ_{u}|^{-\alpha}{\rm d}u.
\]
Now define $\tilde{X}^\circ_v = {\rm e}^{v/\alpha}X^\circ_{\tau^\circ-{\rm e}^{-v}}$, ${\rm e}^{-v}<  \tau^\circ$, so that, on $\{t<\tau^\circ\}$,
\beq
\varphi^\circ_t  = \int_{-\log\tau^\circ}^{-\log t} |\tilde{X}^\circ_v|^{-\alpha}{\rm d}v.
\label{varphicirc}
\eeq
Next, we recall from Lemma \ref{reverse} that $(X^\circ_{(\tau^\circ-s)-}, s\leq \tau^\circ)$ under $\mathbb{P}^\circ_{\nu_a}$, where $\nu_a({\rm d}x) = \mathbb{P}_0(X_{\ell_a -} \in{\rm d}x)$, $|x|<a$, agrees with $(X_s, s<\ell_a)$ under $\mathbb{P}_0$. It therefore follows that, under $\mathbb{P}^\circ_{\nu_a}$, $(\tilde{X}^\circ_v, {\rm e}^{-v}< \tau^\circ)$ is equal in law to $(e^{v/\alpha}X_{e^{-v}},  {\rm e}^{-v}<\ell_a)$ under $\mathbb{P}_0$. We note that, under $\mathbb{P}_0$, $\tilde{X}_v = {\rm e}^{v/\alpha}X_{{\rm e}^{-v}}$, $v\in\mathbb{R}$, is a stationary ergodic Markov process (cf. \cite{Breiman}), with distribution at each time equal to that of $X_1$. Similar  reasoning to that found   in Corollary 1 of \cite{BeW96}, which is fundamentally based on the Ergodic Theorem for stationary processes (c.f. Theorem 6.28 of \cite{Breiman}), gives us that, for each $\varepsilon>0$, on $\{\tau^\circ>\varepsilon\}$, we have $\mathbb{P}^\circ_{\nu_a}$-almost surely,
\begin{equation}
\lim_{r\to\infty}\frac{\varphi^\circ_{\exp(-r)}}{r}  = \lim_{r\to\infty} \frac{1}{r}\int_{-\log\tau^\circ}^{r} |\tilde{X}^\circ_v|^{-\alpha}{\rm d}v= \mathbb{E}_0[|X_1|^{-\alpha}] =2^{-\alpha}\frac{\Gamma(1-\alpha/2)}{\Gamma(1+\alpha/2)}.%=: \Lambda(\alpha),
\label{loggrowth}
\end{equation}
%where the second equality above follows from computations in the proof of  Corollary 1 in \cite{BeW96}.

We are now in a position to prove our main theorem, giving an exact result for the windings of the stable process at the origin.

\begin{proof}[Proofs of Theorems \ref{main} and \ref{intoorigin}] Lemma \ref{reverse} and Remark \ref{rem} gives us the first statement in Theorem \ref{intoorigin}. In order to deduce Theorem \ref{main} as a corollary as we prove the second statement in Theorem \ref{intoorigin},
we focus our attention on    the windings of $(\theta^\circ_{\tau^\circ-s}, s\leq \tau^\circ)$ as $s\to0$. For the latter process, we recall from (\ref{backwards}) that $\theta^\circ_{\tau^\circ-s } = \rho^\circ_{\varphi^\circ_s}$ as $s\downarrow0$ and that $\lim_{t\to\infty}{\varphi^\circ_{\exp(-t)}}/{t}  = 2^{-\alpha}{\Gamma(1-\alpha/2)}/{\Gamma(1+\alpha/2)}$ almost surely. We know  from Theorem \ref{BWth} of Bertoin and Werner \cite{BeW96} that
$(r^{-1/2}\rho_{H_{\exp(rt)}}, t\geq 0)$ converges in the Skorokhod topology  to $(\sqrt{c}B_t, t\geq 0)$ as $r\to\infty$. We also know from Corollary 1 of Bertoin and Werner \cite{BeW96} that $\lim_{t\to\infty} H_{\exp{t}}/t = 2^{-\alpha}{\Gamma(1-\alpha/2)}/{\Gamma(1+\alpha/2)}=:\upsilon_\alpha$ almost surely.
Taking account of the conclusion of Lemma \ref{rhoxi} and (\ref{loggrowth}),
and using the continuity of the composition function with respect to the Skorokhod topology (cf. Whitt \cite{Whi80} and \cite{DoV12}),
we have, as a first application of the latter fact, that  $(r^{-1/2}\rho_{\upsilon_\alpha rt}, t\geq 0)$ converges in the Skorokhod topology  to $(\sqrt{c}B_t, t\geq 0)$ as $r\to\infty$. As a subsequent application of the continuity of the composition operation with respect to the Skorokhod topology, we have  that
$
(r^{-1/2}\theta^\circ_{\tau^\circ - \exp(-rt)}, t\geq 0)
$
converges  in the Skorokhod topology  to $(\sqrt{c}B_t, t\geq 0)$ as $r\to\infty$. That is to say, $(r^{-1/2}\theta_{( \exp(-rt),1]}, t\geq 0)$ converges  in the Skorokhod topology  to $(\sqrt{c}B_t, t\geq 0)$ as $r\to\infty$.
\end{proof}

%%%%%%%%%%%%%%%%%%%%%%%%%%%%%%%%%%%%%%%%%%%%%%%%%%%%%%%%%%%%%%%%
\section{Upcrossings of one-dimensional stable processes}\label{1dwindings}
%%%%%%%%%%%%%%%%%%%%%%%%%%%%%%%%%%%%%%%%%%%%%%%%%%%%%%%%%%%%%%%%
In this Section we turn our interest to the one-dimensional case.
Hereafter, $X=(X_{t},t\geq0)$ will denote a one-dimensional stable process which has both positive and negative jumps with stability index $\alpha\in(0,2)$ and probabilities $\mathbb{P}_x$, $x\in \mathbb{R}$.   Note in particular that we do not insist that $X$ is symmetric. To be more precise, $X$ is a one-dimensional L\'evy process which respects the scaling property (\ref{stablescaling}). We take the normalisation of $X$ such that its characteristic exponent
satisfies
\[
-\frac{1}{t}\log \mathbb{E}_0[{\rm e}^{{\rm i}zX_t}] = |z|^\alpha ({\rm e}^{{\rm i}\pi\alpha(1/2-{q})}\mathbf{1}_{\{z\geq 0\}} + {\rm e}^{-{\rm i}\pi\alpha(1/2-\hat{q})}\mathbf{1}_{\{z<0\}}), \qquad z\in\mathbb{R}, t\geq0,
\]
where $\hat{q} = 1-{q}$ and ${q} = \mathbb{P}_0(X_t >0)$. Note that  ${q}$ does not depend on $t>0$ thanks to the scaling property (\ref{stablescaling}) of $|X|$. We henceforth assume that $\alpha{q}$ and $\alpha\hat{q}$ belong to $(0,1)$. This is equivalent to ensuring that $X$ has jumps of both signs.

The long term behaviour of $X$ can differ from its two-dimensional counter part depending on the value of $\alpha$. When $\alpha\in(0,1)$, we know that  $\lim_{t\to\infty}|X_t| = \infty$ and $\mathbb{P}_x(\tau^{\{0\}}=\infty)=1$, $x\neq 0$, where $\tau^{\{0\}} = \inf\{s>0 : X_s = 0\}$. When $\alpha = 1$, we have $\limsup_{t\to\infty}|X_t| = \infty$, $\liminf_{t\to\infty}|X_t| = 0$ and  $\mathbb{P}_x(\tau^{\{0\}}=\infty) = 1$, $x\neq 0$. Finally, when $\alpha\in(1,2)$, we have  $\mathbb{P}_x(\tau^{\{0\}}<\infty) = 1$, $x\neq 0$.

%We also suppose that $X$ is issued from 0. In that setting, it is well known that
%with $(\mathcal{G}_t)_{t\geq0}$ a standard filtration,
On account of the fact that $X$ is a self-similar Markov process, it follows that, when $X_0\neq 0$,
 there exists
a (c\`{a}dl\`{a}g) Markov additive process (MAP), $(\xi,J) = ((\xi_t, J_t), t\geq 0)$,  %with respect to $(\mathcal{G}_t)_{t\geq0}$
taking values in $\mathbb{R}\times \{-1,1\}$  such that, for $X_0\neq0$,
\beq\label{1dpolar}
X_t=\exp(\xi_{\varsigma_t})J_{\varsigma_t}, \qquad t\leq\tau^{\{0\}},
\eeq
where
\begin{equation}
\varsigma_t=\inf\{s\geq0:\int_{0}^{s}{\rm e}^{\alpha \xi_u}{\rm d}u>t\}=\int_{0}^{t}|X_s|^{-\alpha}{\rm d}s.
\label{varsigma}
\end{equation}
Recall that, when $\alpha\in(0,1]$, $\tau^{\{0\}} = \infty$ almost surely so the decomposition holds for all times, otherwise, when $\alpha\in(1,2)$, it only gives a pathwise decomposition up until first hitting of the origin.
The representation in (\ref{1dpolar}) is known as the {\it Lamperti--Kiu} transform and holds for all real valued self-similar Markov processes up to first absorption at the origin.  The Lamperti--Kiu transform can be thought of as the analogue of the polar decomposition (\ref{Lamperti-type}) for planar stable processes. The MAP $(\xi, J)$ is characterised by a matrix exponent which plays a similar role to the characteristic exponent of L\'evy processes. Specifically, if we denote by $\mathbf{P}_{x,i}$, for $x\in\mathbb{R}$ and $i\in\{-1,1\}$, the probabilities of $(\xi, J)$, then
 \[
\mathbf{E}_{0,i}[e^{z\xi_t};J_t=j ] =   ({\rm e}^{{\boldsymbol F}(z) t})_{i,j}, \qquad i,j\in\{-1,1\}, t\geq 0,
  \]
  where\footnote{Here and throughout the paper the matrix entries are arranged by
    \[
      A=\left(\begin{matrix}
          A_{1,1} & A_{1,-1}\\
          A_{-1,1} & A_{-1,-1}
      \end{matrix}\right).
    \]
  }
  \begin{equation}\label{eq:MAP_expo}
    {\boldsymbol F}({z})=\left(\begin{array}{cc}
    -\dfrac{\Gamma(\alpha-{z})\Gamma(1+{z})}{\Gamma(\alpha\hat{q}-{z})\Gamma(1-\alpha\hat{q}+{z})} & \dfrac{\Gamma(\alpha-{z})\Gamma(1+{z})}{\Gamma(\alpha\hat{q})\Gamma(1-\alpha\hat{q})}\\
    &\\
       \dfrac{\Gamma(\alpha-{z})\Gamma(1+{z})}{\Gamma(\alpha{q})\Gamma(1-\alpha{q})}& -\dfrac{\Gamma(\alpha-{z})\Gamma(1+{z})}{\Gamma(\alpha{q}-{z})\Gamma(1-\alpha{q}+{z})}
    \end{array}\right),
  \end{equation}
  for Re$({z}) \in (-1,\alpha)$; see e.g. \cite{CPR, K}.
Note that the above matrix is indexed ${\boldsymbol F}(z)_{1,1}$ in the top left-hand corner. Note, moreover, that  the $Q$-matrix of $J$ is equal to ${\boldsymbol F}(0)$.

Now let $N := (N_t,t\geq0)$ be the counting process of the number of jumps of the process $J$ from -1 to 1 in the time interval $[0,t]$ when $X$ is issued from a point other than the origin. That is to say,
\[
N_t = \sum_{0\leq s\leq t}\mathbf{1}_{(J_{s-} = -1, \, J_s = 1)}, \qquad t\geq 0.
\] We also define ${U}: = ({U}_{t}, 0\leq t\leq \tau^{\{0\}})$ to be the counting process of the number of upcrossings from $(-\infty,0)$ to $(0,\infty)$ up to time $t$. (Note that, under the assumptions we have made on the class of stable processes we consider, $X$ cannot creep upwards, or indeed downwards, and hence upcrossings into the positive half-line will always be by a jump.) The processes $N$ and ${U}$  are related by the time change ${U}_{t} = N_{\varsigma_t}$, $0\leq t\leq \tau^{\{0\}}$.
For every $n\in\mathbb{N}$, we define
\[
T_n=\inf\{t>0:N_t=n\}.
\]
The random time between two consecutive upcrossings in the time-scale of the MAP is distributed as the sum of two independent exponential variables, the holding times of $J$ between the transitions $1 \rightarrow -1$ and $-1\rightarrow 1$, with respective rates ${\boldsymbol F}(0)_{1,-1}$ and ${\boldsymbol F}(0)_{-1,1}$. Classical renewal theory tells us that, for all $x\in\mathbb{R}$ and $i\in\{-1,1\}$, we have $\mathbf{P}_{x,i}$-almost surely,
\begin{align}
\lim_{t\to\infty}\frac{N_t}{t}&=\lim_{n\to\infty}\frac{n}{T_n}\notag\\
&=\frac{{\boldsymbol F}(0)_{1,-1}{\boldsymbol F}(0)_{-1,1}}{{\boldsymbol F}(0)_{1,-1}+{\boldsymbol F}(0)_{-1,1}} \notag\\
& =\frac{\Gamma(\alpha)}{\Gamma(\alpha\hat{q})\Gamma(1-\alpha\hat{q})  +\Gamma(\alpha{q})\Gamma(1-\alpha{q})}\notag\\
&=\frac{\Gamma(\alpha)\sin(\pi\alpha\rho)\sin(\pi\alpha\hat\rho)}{\pi(\sin(\pi\alpha\rho)+\sin(\pi\alpha\hat\rho))}.
\label{slln}
\end{align}

\bigskip

We prove Theorem \ref{main2} by splitting it into two propositions, which correspond to the first and second part of the theorem, respectively. We consider first the case that $\alpha\in(0,1]$, where we know that $\limsup_{t\to\infty}|X_t| =\infty$ almost surely.
First, we can get easily a large time asymptotic result. % yielding from \cite{BeW96}.
\begin{prop}
Suppose that $X$ is a one-dimensional stable process with two-sided jumps and with index $\alpha\in(0,1]$. Then, when $X$ is issued from a point other than the origin,
\beq
\lim_{t\to\infty}\frac{{U}_{t}}{\log t}= \frac{\sin(\pi\alpha\rho)\sin(\pi\alpha\hat\rho)}{\alpha\pi\sin(\pi\alpha)}
\label{oo}
\eeq
almost surely and, when $X$ is issued from the orgin,
\beq
 \lim_{t\to0}\frac{{U}_{[t,1]}}{\log (1/t)} = \frac{\sin(\pi\alpha\rho)\sin(\pi\alpha\hat\rho)}{\alpha\pi\sin(\pi\alpha)}
  \label{o}
\eeq
almost surely. In both cases, we understand the constant in the limit as  equal to infinity when $\alpha = 1$.
\end{prop}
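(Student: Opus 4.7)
The plan is to reduce both parts to the renewal identity (\ref{slln}) via the Lamperti--Kiu representation (\ref{1dpolar}), and then to exploit the stationarity of the auxiliary process $\tilde X_v:=e^{-v/\alpha}X_{e^v}$, $v\in\mathbb{R}$. The scaling property (\ref{stablescaling}) makes $\tilde X$ stationary under $\mathbb{P}_0$ with one-dimensional marginal equal in law to $X_1$, and its Markov structure (inherited from $X$) combined with the irreducibility of the stable semigroup on $\mathbb{R}\setminus\{0\}$ yields unique ergodicity with stationary distribution $\pi$.

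For Part (i), when $X$ is issued from $x\neq 0$, I factor
\[
\frac{U_t}{\log t}= \frac{N_{\varsigma_t}}{\varsigma_t}\cdot\frac{\varsigma_t}{\log t},
\]
using $U_t=N_{\varsigma_t}$. Transience of $X$ gives $\varsigma_t\to\infty$, so (\ref{slln}) delivers $N_{\varsigma_t}/\varsigma_t\to\Lambda$ almost surely, where $\Lambda$ denotes the right-hand side of (\ref{slln}). The change of variables $s=e^v$ yields $\varsigma_t=\int_{-\infty}^{\log t}|\tilde X_v|^{-\alpha}\,{\rm d}v$, and a Markov ergodic theorem applied to $\tilde X$ (integrating $|\cdot|^{-\alpha}$ against $\pi$) yields $\varsigma_t/\log t\to\mu_\alpha:=\mathbb{E}_0[|X_1|^{-\alpha}]$ almost surely; the contribution from $v\in(-\infty,0]$ is finite under $\mathbb{P}_x$ and washes out in the ratio. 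Combining gives $U_t/\log t\to \Lambda\mu_\alpha$, and the final algebraic step is to verify that $\Lambda\mu_\alpha$ collapses to $\sin(\pi\alpha\rho)\sin(\pi\alpha\hat\rho)/(\alpha\pi\sin(\pi\alpha))$ using the known formula for the negative Mellin moment of $|X_1|$ together with the reflection identity $\Gamma(\alpha)\Gamma(1-\alpha)=\pi/\sin(\pi\alpha)$. At $\alpha=1$ the integrand $|X_1|^{-1}$ fails to be integrable near the origin, so $\mu_1=\infty$ and the limit is $+\infty$, consistent with the stated convention.

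For Part (ii), when $X_0=0$, I transfer the problem from backward $X$-time to forward $\tilde X$-time. Under the change of variable $v=\log u$ the sign of $\tilde X_v$ coincides with that of $X_{e^v}$, so an upcrossing of $X$ at $u\in(0,1]$ is in bijection with an upcrossing of $\tilde X$ at $v=\log u\in(-\infty,0]$. Writing $\mathcal{U}^{\tilde X}$ for the upcrossing point process of $\tilde X$, this gives $U_{[s,1]}=\mathcal{U}^{\tilde X}([\log s,0])$. Under $\mathbb{P}_0$ stationarity of $\tilde X$ makes $\mathcal{U}^{\tilde X}$ a stationary ergodic point process on $\mathbb{R}$ with some intensity $\lambda$, and the two-sided ergodic theorem yields $\mathcal{U}^{\tilde X}([-V,0])/V\to\lambda$ almost surely as $V\to\infty$; setting $V=\log(1/s)$ gives the stated limit with constant $\lambda$. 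To identify $\lambda$, I apply Part (i) to the Markov-shifted process $(X_{1+u},u\geq 0)$, which under $\mathbb{P}_0$ is almost surely a stable process issued from $X_1\neq 0$; this gives $U_{[1,T]}/\log T\to C$ almost surely as $T\to\infty$, where $C$ is the constant in (\ref{oo}). Since $U_{[1,T]}=\mathcal{U}^{\tilde X}([0,\log T])$, the forward ergodic average must also equal $\lambda$, forcing $\lambda=C$.

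The principal obstacle will be justifying the stationary--ergodic status of $(\tilde X_v)_{v\in\mathbb{R}}$ under $\mathbb{P}_0$ and the applicability of a two-sided ergodic theorem to its upcrossing point process. Stationarity is immediate from (\ref{stablescaling}); ergodicity must be argued from the Markov structure, either by showing irreducibility of the $\tilde X$-semigroup on $\mathbb{R}\setminus\{0\}$ or by appealing to the tail $\sigma$-algebra of the underlying MAP. A secondary but routine technical point is the promotion of the ergodic convergence from $\mathbb{P}_0$ to $\mathbb{P}_x$ in Part (i), handled by noting that the law of $\tilde X_v$ converges to $\pi$ as $v\to\infty$ under any initial condition so that the Markov ergodic theorem applies.
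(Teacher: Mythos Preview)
Your Part (i) is essentially the paper's argument: factor $U_t/\log t$ through the Lamperti--Kiu clock $\varsigma$, invoke (\ref{slln}) for $N_{\varsigma_t}/\varsigma_t$, and control $\varsigma_{\exp t}/t$ via the ergodic theorem for the stationary process $\tilde X_v=e^{-v/\alpha}X_{e^v}$, finishing with Zolotarev's moment formula and the reflection identity.

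Your Part (ii), however, takes a genuinely different and more elementary route than the paper. The paper does \emph{not} work with $\tilde X$ under $\mathbb{P}_0$ for the small-time limit; instead it invokes the one-dimensional Riesz--Bogdan--\.Zak transform and Nagasawa's duality to identify the time-reversal of $(X,\mathbb{P}_0)$ from a last-exit time with the conditioned process $(X^\circ,\mathbb{P}^\circ)$, and then repeats the clock analysis of Part (i) for the MAP underlying $X^\circ$. You bypass all of this machinery by observing that sign changes of $X$ on $[e^{-V},1]$ are exactly sign changes of the two-sided stationary process $\tilde X$ on $[-V,0]$, so $U_{[e^{-V},1]}/V$ is simply a backward ergodic average for the stationary upcrossing point process $\mathcal{U}^{\tilde X}$, whose intensity you then identify via the forward average and Part (i) applied after the Markov shift at time $1$. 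This is shorter and self-contained. What the paper's longer route buys is a template that transplants to Theorem~\ref{main2}(iii): when $\alpha\in(1,2)$ the origin is hit, no entrance law from $0$ with self-similar stationarity is available for $X$ itself, and the duality/conditioning step becomes essential. Your direct stationary argument does not extend to that regime.

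One small gap to close: at $\alpha=1$ the intensity $\lambda=\mathbb{E}_0[\mathcal{U}^{\tilde X}([0,1])]$ is infinite (since $\mathbb{E}_x[|X_s|^{-1}]=\infty$ for the Cauchy process), so the ergodic theorem for point processes does not give a finite limit. You need the same truncation/monotonicity argument the paper uses in its Part~(i) to upgrade to $U_{[t,1]}/\log(1/t)\to\infty$ almost surely.
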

\begin{proof}
Recalling that $
{U}_{t}=N_{\varsigma_t}
$, $t\geq 0$, on account of (\ref{slln}), it suffices to prove that $\varsigma_t$ grows like $\log t$, at least in an almost sure sense. The method we use is similar to the analysis of the clock $H$ for planar stable processes in \cite{BeW96}. In particular, it is straightforward to see that $\tilde{X}_v = e^{-\alpha/v}X_{e^v}$, $v\in\mathbb{R}$, under $\mathbb{P}_0$ is a stationary ergodic Markov process with stationary distribution equal to that of $X_1$ and hence, from (\ref{varsigma}) we have
\beq
\hspace{1cm}
\lim_{t\to\infty}\frac{\varsigma_{\exp t}}{t}=\lim_{t\to\infty}\frac{\varsigma_{\exp t} - \varsigma_1}{t}=\frac{1}{t}\int_1^{\exp t} |X_u|^{-\alpha}{\rm d}u = \frac{1}{t}\int_0^{t} |\tilde{X}_v|^{-\alpha}{\rm d}v = \mathbb{E}_0[|X_1|^{-\alpha}].
\label{varsigmaslln}
\eeq
%As in the case of planar stable processes, we can compute explicitly the expectation $ \mathbb{E}_0[|X_1|^{-\alpha}]$ using a similar technique to the one employed in \cite{BeW96}, however, for all intents and purpose, it suffices to note here that this expectation is finite.
We can compute the expectation $ \mathbb{E}_0[|X_1|^{-\alpha}]$ by recalling the following result from Theorem 2.6.3 in Zolotarev \cite{Z}, which states that,
for all $s$ in the strip $-1<\Re(s)<\alpha$, we have
\[%\label{eqn_Mellin_stable_distribution}
 {\mathbb E} \left[ X^{s} {\bf 1}_{\{X>0\}} \right]=\frac{\sin(\pi \rho s)}{\sin(\pi s)} \frac{\Gamma(1-s/\alpha)}{\Gamma(1-s)}.
 \]
For $\alpha\in(0,1)$ this leads to
\[
 \mathbb{E}[|X_1|^{-\alpha}] = \frac{\sin(\pi  \alpha\rho)  + \sin(\pi \alpha\hat\rho )}{\Gamma(1+\alpha)\sin(\pi \alpha)}.  \]
Note however, this moment explodes when $\alpha = 1$. One may also  verify directly from the Cauchy density that, for the Cauchy process, $\mathbb{E}[|X_1|^{-1}] = \infty$.

The almost sure limit (\ref{oo})  when $\alpha\in(0,1)$ now follows by combining the two strong laws of large numbers in (\ref{slln}) and (\ref{varsigmaslln}). When $\alpha = 1$, we note that, for each $M>0$, we have for all $t$  sufficiently large that ${\varsigma_{\exp t} }/{t}>M$. Using the monotonicity of the counting process $N$ and the strong law of large numbers in \eqref{slln}, it now follows that
\[
\liminf_{t\to\infty} \frac{U_{\exp t}}{t} \geq \liminf_{t\to\infty} M\frac{N_{Mt}}{M t} >M/2\pi.
\]
Since $M$ can be chosen arbitrarily large, the statement of the theorem also follows for Cauchy processes.

Now suppose that $X_0 = 0$ and we consider the upcrossings of  $X$ as $t\to0$. Appealing to a similar method as for the planar case, we will make use of the Riesz--Bogdan--\.Zak transform that was proved in \cite{K}. As $X$ is no longer isotropic (meaning symmetric in the one-dimensional case), it is slightly more complicated to state.

 To this end, define as before,
\[
\eta(t) = \inf\{s>0 : \int_0^s |X_u|^{-2\alpha}{\rm d}u >t\}, \qquad t\geq 0.
\]
Then, for all $x\in\mathbb{R}\backslash\{0\}$, $(-1/{X}_{\eta(t)})_{t\geq 0}$ under $\mathbb{P}_{x}$
is a rssMp  with underlying MAP via the Lamperti--Kiu transform given by
\begin{equation}
\boldsymbol{F}^\circ(z) =
 \left(
  \begin{array}{cc}
    - \dfrac{\Gamma(1-z)\Gamma(\alpha+z)}
      {\Gamma(1-\alpha{q}-z)\Gamma(\alpha{q}+ z)}
    & \dfrac{\Gamma(1-z)\Gamma(\alpha+z)}
      {\Gamma(\alpha{q})\Gamma(1-\alpha{q})}
    \\
    &\\
    \dfrac{\Gamma(1-z)\Gamma(\alpha+ z)}
      {\Gamma(\alpha\hat{q})\Gamma(1-\alpha\hat{q})}
    & - \dfrac{\Gamma(1-z)\Gamma(\alpha+z)}
      {\Gamma(1-\alpha\hat{q}-z)\Gamma(\alpha\hat{q}+z)}
  \end{array}
  \right),
  \label{Fcirc}
\end{equation}
for $\Re(z)\in(-\alpha,1).$
Moreover, for all $x\in\mathbb{R}\backslash\{0\}$, $(-1/{X}_{\eta(t)})_{t\geq 0}$ under $\mathbb{P}_{x}$ is equal in law to $(X, \mathbb{P}_{-1/x}^\circ)$, where
\begin{equation}\hspace{-1cm}
\left.\frac{{\rm d}\mathbb{P}^\circ_x}{{\rm d}\mathbb{P}_x}\right|_{\mathcal{F}_t} = \frac{h(X_t)%|X_t|^{\alpha -1}
}{h(x)%|x|^{\alpha -1}
}\mathbf{1}_{(t<\tau^{\{0\}})}
\label{updownCOM}
\end{equation}
with
\[
h(x) =\left(\sin(\pi\alpha\hat{q})\mathbf{1}_{(x\geq 0)} +
\sin(\pi\alpha{q})\mathbf{1}_{(x<0)}\right) |x|^{\alpha -1}
\]
and $\mathcal{F}_t := \sigma(X_s: s\leq t)$, $t\geq 0$.

It was shown in  \cite{KRS15} that, when $\alpha\in(0,1)$,  the change of measure in \eqref{updownCOM}  corresponds to conditioning $X$ to continuously absorb at the origin.
Appealing to Nagasawa's method of duality we can show that the analogue of Lemma \ref{reverse} also holds here. Indeed, the analogue of (\ref{nagasawacheck}) can be easily checked,
%using the same notation here, albeit with the meaning that the state space is one-dimensional, the key criterion that we need to check is
%\[
%p_t(x, {\rm d}y)\varpi({\rm d}x)=p^{\circ}_t(y, {\rm d}x)\varpi({\rm d}y) , \qquad   x,y\in\mathbb{R}\backslash\{0\}, t\geq 0,
%\]
%where
% \[
%\varpi({\rm d}x): = \int_{\mathbb{R}}\nu({\rm d}a)R(a, {\rm d}x), \qquad x\in \mathbb{R}\backslash\{0\},
%\]
%with
%$\nu({\rm d}x) = \delta_{0}({\rm d}x)$. However, verification of this fact is once again trivial on account of the fact that
recalling, in particular,  that the resolvent $R(x, {\rm d}y)$, $x,y\in\mathbb{R}$ is known to satisfy $R(x, {\rm d}y) = h(y-x){\rm d}y$ up to a multiplicative constant; see e.g. Kyprianou \cite{ALEAKyp}.

 If we write $X^\circ$ as a canonical version  of the real-valued self-similar Markov process under $ \mathbb{P}^\circ_x$, $x\in\mathbb{R}\backslash\{0\}$, it is now the case that understanding ${U}_{[t,1]}$ as $t\to0$ is equivalent to understanding ${U}^\circ_{\tau^\circ-s}$ as $s\to0$, where ${U}^\circ$ is the number of upcrossings of $X^\circ$ and $\tau^\circ = \inf\{s>0: X_s^\circ = 0\}$. (Note that upcrossings of $X$ corresponds to downcrossings of $X^\circ$, however, every upcrossing is followed by a downcrossing and vice versa.) At this point, we note that the MAP that underlies the process $(X, \mathbb{P}^\circ_\cdot)$ has the property that its modulating chain, say $J^\circ$, has the same $Q$-matrix as $J$, albeit the roles of ${q}$ and $\hat{q}$ are interchanged. This has the effect that, if we write $(N^\circ_t, t\geq 0)$ for the process that gives the running count of the number of switches that $J^\circ$ makes from $-1$ to $1$, then it also respects the same strong law of large numbers as (\ref{slln}). Note in particular that, the right hand side of \eqref{slln} is invariant to exchanging the roles of ${q}$ and $\hat{q}$.

The Lamperti--Kiu representation of $X^\circ$ tells us that, if $\varsigma^\circ_t$ is the time change associated to its underlying MAP, then
\[
\varsigma^\circ_t = \int_{0}^{t}|X^\circ_s|^{-\alpha}{\rm d}s.
\]
A computation similar to the one that leads to the equation (\ref{varphicirc}) also tells us that ${U}^\circ_{\tau^\circ -s} = N^\circ_{\sigma^\circ_s}$ and,  for $s\leq \tau^\circ$,
\[
\sigma^\circ_t =  \int_{-\log\tau^\circ}^{-\log t} |\tilde{X}^\circ_v|^{-\alpha}{\rm d}v, \qquad  t\leq \tau^\circ,
\]
where $\tilde{X}^\circ_v = e^{v/\alpha}X^\circ_{\tau^\circ-e^{-v}}$, $e^{-v}<\tau^\circ$.
Continuing along the lines of the proof of (\ref{loggrowth}), we find that, almost surely,
\[
\lim_{s\to\infty}\frac{\sigma^\circ_{e^{-s}}}{s} = \mathbb{E}_0[|X_1|^{-\alpha}].
\]
Combining the strong law of large numbers for $N^\circ$ with the above almost sure limit, we find that (\ref{o}) holds.
\end{proof}

Examining the proof above for the limit as $t\to0$, one also sees  a path to proof for the upcrossings as $t\to\tau^{\{0\}}$ in the case $\alpha\in(1,2)$. Specifically, we note that ${U}_{\tau^{\{0\}} -s} = N_{\sigma_s}$ where,  for $s\leq \tau^{\{0\}}$,
\[
\sigma_t =  \int_{-\log\tau^{\{0\}}}^{-\log t} |\tilde{X}_v|^{-\alpha}{\rm d}v, \qquad  t\leq \tau^{\{0\}},
\]
where $\tilde{X}_v = e^{v/\alpha}X_{\tau^{\{0\}}-e^{-v}}$, ${\rm e}^{-v}<\tau^{\{0\}}$. In \cite{CPR} it was shown that that when $\alpha\in(1,2)$,  the change of measure in \eqref{updownCOM} corresponds to conditioning $X$ to avoid the origin. Moreover, it was proved in Dereich et al. \cite{DDK}, that, if $X^\circ$ is the canonical process with probabilities $\mathbb{P}^\circ_x$, $x\in\mathbb{R}\backslash\{0\}$, then there is a unique definition of  $(X^\circ, \mathbb{P}^\circ_0)$ in such a way that $X^\circ$ leaves the origin continuously and $\mathbb{P}^\circ_0 = \lim_{|x|\to0}\mathbb{P}^\circ_x$ in the sense of weak convergence on the Skorokhod space.

Continuing again along the lines of the proof of (\ref{loggrowth}), we can use Nagasawa duality to show the following lemma.
\begin{lem}\label{circreverse}
 The time reversed process  $(X_{(\tau^{\{0\}} -s)-},s\leq \tau^{\{0\}} )$ when issued from a randomised point with law $\nu^\circ_a({\rm d}x) : = \mathbb{P}^\circ_0(X^\circ_{\ell_a^\circ-}\in{\rm d}x)$, where $\ell^\circ_a = \sup\{t>0: |X^\circ_t|<a\}$ and $a>0$,  is equal in law to $(X^\circ_t, t< \ell^\circ_a)$ under $\mathbb{P}^\circ_0$.
 \end{lem}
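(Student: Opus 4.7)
The proof runs parallel to that of Lemma \ref{reverse}, with the roles of $X$ and $X^\circ$ interchanged. The plan is to apply Nagasawa's duality theorem (Theorem 3.5 of \cite{N}) directly to the forward process $(X^\circ, \mathbb{P}^\circ_0)$: reversing it from the last-passage time $\ell^\circ_a$ will produce a time-homogeneous Markov process with the semigroup of $X$, initial distribution $\nu^\circ_a({\rm d}x) = \mathbb{P}^\circ_0(X^\circ_{\ell^\circ_a-}\in{\rm d}x)$, and terminal time equal to the first hitting time of the origin for the $X$-process, which is $\tau^{\{0\}}$. This is exactly the statement of the lemma.

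The principal technical step is verification of the analogue of \eqref{nagasawacheck}, namely
\[
p^\circ_t(x, {\rm d}y)\, \varpi^\circ({\rm d}x) = p_t(y, {\rm d}x)\, \varpi^\circ({\rm d}y), \qquad x,y\in\mathbb{R}\setminus\{0\},\ t\geq 0,
\]
with $\varpi^\circ({\rm d}x) = h(x)\, {\rm d}x$, which (up to a multiplicative constant) is the resolvent $R(0,\cdot)$ of $X$, as the excerpt has already noted. Using the $h$-transform identity \eqref{updownCOM} in the form $p^\circ_t(x, {\rm d}y) = (h(y)/h(x))\, p_t(x, {\rm d}y)\mathbf{1}_{(t<\tau^{\{0\}})}$, the left-hand side becomes $h(y)\, p_t(x, {\rm d}y)\, {\rm d}x$, which by the classical self-duality of L\'evy-process semigroups, $p_t(x, {\rm d}y)\, {\rm d}x = p_t(y, {\rm d}x)\, {\rm d}y$, equals $h(y)\, p_t(y, {\rm d}x)\, {\rm d}y$, matching the right-hand side. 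The remaining hypotheses A.3.1--A.3.3 of Nagasawa are verified routinely, exactly as in the proof of Lemma \ref{reverse}.

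The principal obstacle is that, for $\alpha\in(1,2)$, the measure $\mathbb{P}^\circ_0$ is not a standard Markov starting law: the $h$-transform \eqref{updownCOM} conditions $X$ to \emph{avoid} the origin rather than to absorb continuously there, and $\mathbb{P}^\circ_0$ is defined only via the entrance law of Dereich, D\"oring and Kyprianou \cite{DDK} as the weak Skorokhod limit $\lim_{|x|\downarrow 0}\mathbb{P}^\circ_x$. I would accommodate this by first applying Nagasawa's theorem under $\mathbb{P}^\circ_x$ for $x\neq 0$, where the forward process is a genuine Markov process and the time-reversed law takes the stated form with $\nu^\circ_a$ replaced by $\mathbb{P}^\circ_x(X^\circ_{\ell^\circ_a-}\in\cdot)$, and then passing to the limit $|x|\downarrow 0$ by appealing to the weak convergence from \cite{DDK}. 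Continuity of the last-exit functional $\ell^\circ_a$, which is strictly positive with full $\mathbb{P}^\circ_0$ probability for every $a>0$ under the entrance law, then transfers the distributional identity to the limiting starting point and completes the proof.
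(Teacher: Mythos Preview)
Your strategy---apply Nagasawa's duality theorem to $X^\circ$ under $\mathbb{P}^\circ_0$, parallel to the proof of Lemma~\ref{reverse}---is exactly what the paper does (its own proof is the single phrase ``we can use Nagasawa duality''), and your handling of the entrance law $\mathbb{P}^\circ_0$ via the weak limit of \cite{DDK} is a reasonable way to fill the gap the paper leaves implicit.

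One step in your verification does not go through as written. The identity you call ``classical self-duality of L\'evy-process semigroups, $p_t(x,{\rm d}y)\,{\rm d}x = p_t(y,{\rm d}x)\,{\rm d}y$'' holds only for \emph{symmetric} L\'evy processes; for a general L\'evy process the Lebesgue dual is $-X$, so the correct relation is $p_t(x,{\rm d}y)\,{\rm d}x = \hat p_t(y,{\rm d}x)\,{\rm d}y$ with $\hat p_t$ the semigroup of the stable process having $\rho$ and $\hat\rho$ interchanged. Carried through, your computation with $\varpi^\circ({\rm d}x)=h(x)\,{\rm d}x$ identifies the Nagasawa dual of $X^\circ$ as the \emph{dual} stable process killed at $0$, not $X$ itself. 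The paper is equally silent on this point, and for the applications in Section~\ref{1dwindings} (which use only $|X|$ and crossing counts, both invariant under $X\mapsto -X$) the discrepancy is harmless; but as a proof of the lemma exactly as stated in the asymmetric case, the self-duality step fails. A secondary issue: in Nagasawa's framework the reference measure for the forward process $X^\circ$ issued from $0$ should be the potential $R^\circ(0,\cdot)$ of $X^\circ$ under the entrance law, not the potential $R(0,\cdot)$ of $X$ as you write; these are different objects, and you would need either to compute the former or to justify that $h(x)\,{\rm d}x$ is the right excessive measure for the reversal.
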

Combining with the preceding reasoning, for each fixed $a>0$,  we will  have the $\mathbb{P}_{\nu^\circ_a}$-almost sure limit
\[
\lim_{s\to\infty}\frac{\sigma_{e^{-s}}}{s} = \mathbb{E}^\circ_0[|X^\circ_1|^{-\alpha}],
\]
providing that the expectation on the right-hand side makes sense.
  In order to compute the expectation $\mathbb{E}^\circ_0[|X^\circ_1|^{-\alpha}]$, thereby showing that it is finite,
we can appeal to reasoning in \cite{Rivero} and \cite{PPR}; see also Section 4.4. of \cite{KKPW}.
Specifically, following the computations in all of these papers (for which some simple facts in  Lemma 12 of \cite{DDK} concerning the construction of $\mathbb{P}^\circ_0$ will be useful) we easily conclude that,
\begin{equation}
\mathbb{E}^\circ_0[f(X^\circ_1)] = \Gamma(-\alpha)\frac{\sin(\pi\alpha\rho)}{\pi}\hat{\mathbf{E}}^\circ_{0,1}\left[f(I^{-1/\alpha})I^{-1}\right]\notag \\
+\Gamma(-\alpha)\frac{\sin(\pi\alpha\hat\rho)}{\pi}\hat{\mathbf{E}}^\circ_{0,-1}\left[f(-I^{-1/\alpha})I^{-1}\right],
\label{omean}
\end{equation}
where $\hat{\mathbf{E}}^\circ_{0,i}$ is expectation with respect to $\hat{\mathbf{P}}^\circ_{0,i}$, which is the law of the dual of the MAP underlying $(X^\circ,\mathbb{P}^\circ_x)$, $x\neq 0$, when issued from $(0,i)$, $i\in\{-1,1\}$ and  $I = \int_0^\infty \exp\{\alpha\xi_u\}{\rm d}u$. It is important to note here that, as  $\mathbb{P}^\circ_0$ corresponds to the law of $X$ conditioned to avoid the origin, the MAP $(\xi, J)$ has the property that $\xi_t\to\infty$ as $t\to\infty$ almost surely under $\mathbf{P}^\circ_{0,i}$. Therefore, under $\hat{\mathbf{P}}^\circ_{0,i}$ we have that $\xi_t\to-\infty$ almost surely, and this is sufficient to conclude that $I<\infty$ almost surely.  In fact, one can verify directly from the matrices \eqref{eq:MAP_expo} and \eqref{Fcirc} that $\hat{\mathbf{P}}^\circ_{0,i}$ is nothing more than the MAP corresponding to the negative of the stable process conditioned to continuously absorb at the origin (which is not surprising given the statement of Riesz--Bogdan--\.Zak transformation). For the special case that $f(x) = |x|^{-\alpha}$, it now follows rather easily from (\ref{omean}) that
\[
\mathbb{E}^\circ_0[|X^\circ_1|^{ -\alpha }]  = \Gamma(-\alpha)\frac{(\sin(\pi\alpha\rho)+\sin(\pi\alpha\hat\rho)) }{\pi}.
\]

Combining with the strong law of large numbers in \eqref{slln}, which remains valid in the current setting, i.e. that $\alpha\in(1,2)$, we have $\mathbb{P}_{\nu^\circ_a}$-almost surely.
\begin{align}
\lim_{t\to\infty} \frac{N_{\varsigma_{{\rm e}^{-t}}}}{t}& = \frac{\Gamma(\alpha)\sin(\pi\alpha\rho)\sin(\pi\alpha\hat\rho)}{\pi(\sin(\pi\alpha\rho)+\sin(\pi\alpha\hat\rho))}\times \Gamma(-\alpha)\frac{(\sin(\pi\alpha\rho)+\sin(\pi\alpha\hat\rho)) }{\pi}\notag\\
& =\frac{\sin(\pi\alpha\rho)\sin(\pi\alpha\hat\rho)}{-\alpha\pi\sin(\pi\alpha)}
\label{a.e.}
\end{align}
where we have used the reflection formula for the gamma function.

Unlike before, we now have the problem that, because of the direction of time-reversal, we cannot use the same trick as in Remark \ref{rem}. A way around this is to first show that $\nu^\circ_a$ is absolutely continuous with respect to Lebesgue measure. As we can vary the value of $a>0$, this would give us \eqref{a.e.} $\mathbb{P}_x$-almost surely, for almost every $x\in\mathbb{R}\backslash\{0\}$. The missing Lebesgue-null set of starting points can be recovered by a simple trick. Suppose $x\neq 0$ is such a point. We can run the stable process until it first enters the interval $(-x/2,x/2)$, which it will do with probability 1. Noting that the first entrance into this interval is almost surely finite and the law of the first entry point is  absolutely continuous with respect to Lebesgue measure (cf. \cite{Rog}), the Lebesgue a.e. behaviour in \eqref{a.e.} now delivers the desired result.

We are thus left with proving that $\nu^\circ_a$ is absolutely continuous with respect to Lebesgue measure. Invoking a simple scaling argument, similar to those that we have already seen, it suffices to show that $\nu^\circ_1$ is absolutely continuous.

To this end, let us consider $b>1$. Thanks to Lemma \ref{circreverse}, we have that,  under $\mathbb{P}_{\nu^\circ_b}$, the random time
\[
\sup\{t>0: |X_{(\tau^{\{0\}} -t)-}|<1\} =  \inf\{t>0: |X_t|<1\} =:\tau^{(-1,1)}
\]
is equal in law to $\ell_1$  under $\mathbb{P}^\circ_0$
and hence the law of  $X_{\tau^{(-1,1)}}$ is equal in law to $\nu^\circ_1$.

Note that $\mathbb{P}^\circ_0(\lim_{t\to\infty}|X^\circ_t| = \infty) = 1$. This follows on account of the fact that, if $(\xi, J)$ is the MAP underlying $X$ through the Lamperti--Kiu tranform, then $(-\xi,J)$ is the MAP underlying $X^\circ$; see for example the proof of Theorem 3.1 in \cite{K}. As a consequence $\nu^\circ_b$ converges weakly to the Dirac measure $\delta_{\pm\infty}$, where  $\pm\infty: = \{\infty\}\cup\{-\infty\}$ is  seen as the one-point compactification of $\mathbb{R}$. Equivalently, because the limit on the right-hand side below exists, for bounded and measurable $f$ on $(-1,1)$,
\[
\int_{(-1,1)}f(x)\nu^\circ_1(\d x ) = \lim_{b\to\infty}\mathbb{E}_{\nu^\circ_b}[f(X_{\tau^{(-1,1)}})] = \lim_{|x|\to\infty}\mathbb{E}_{x}[f(X_{\tau^{(-1,1)}})].
\]
The limit on the right-hand side above can be calculated thanks to \cite{KPW}. Indeed, by inspecting equation (20) of \cite{KPW}, one may easily take the limit to see that
\[
 \lim_{|x|\to\infty}\mathbb{E}_{x}[f(X_{\tau^{(-1,1)}})]
 = \frac{2^{\alpha-1} \Gamma(2-\alpha)}{\Gamma(1-\alpha\hat\rho)\Gamma(1-\alpha\rho)}
 \int_{-1}^1 f(y)(1+y)^{-\alpha\rho}(1-y)^{-\alpha\hat\rho}{\rm d} y,
\]
and hence, $\nu^\circ_1$ is absolutely continuous with respect to Lebesgue measure as required.

We thus reach the following conclusion which covers the third part of Theorem \ref{main2}.
\begin{prop}
Suppose that $\alpha\in(1,2)$ and $X$ is issued from a point other than the origin, then %there exists a constant ${c_\alpha}\in(0,\infty)$ such that, when $X_0\neq 0$,
\beq
\lim_{t\to0}\frac{{U}_{\tau^{\{0\}}-t}}{\log (1/t)}= \frac{\sin(\pi\alpha\rho)\sin(\pi\alpha\hat\rho)}{\alpha\pi|\sin(\pi\alpha)|}
\label{ooo}
\eeq
almost surely.
\end{prop}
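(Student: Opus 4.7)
The plan is to mimic the proof strategy already developed for parts (i) and (ii): express the upcrossing count as a time-changed renewal count for the modulating chain of the Lamperti--Kiu MAP, then combine a classical renewal SLLN for the MAP with an ergodic-theorem identification of the clock. The new twist when $\alpha\in(1,2)$ is that time must now be reversed from the hitting time $\tau^{\{0\}}$, so we use Nagasawa duality to replace the reversed process by $(X^\circ,\mathbb{P}^\circ_0)$, the stable process conditioned to avoid the origin and started from zero.

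First I would write $U_{\tau^{\{0\}}-s}=N_{\sigma_s}$, where $\sigma_s$ is the Lamperti--Kiu clock read backwards from $\tau^{\{0\}}$, and change variables via $\tilde X_v=\mathrm{e}^{v/\alpha}X_{\tau^{\{0\}}-\mathrm{e}^{-v}}$ to obtain
\[
\sigma_{\mathrm{e}^{-t}}=\int_{-\log\tau^{\{0\}}}^{t}|\tilde X_v|^{-\alpha}\,\mathrm{d}v.
\]
Invoking Lemma \ref{circreverse}, the time-reversed process $(X_{(\tau^{\{0\}}-s)-},\,s\le\tau^{\{0\}})$ started from the random exit point $\nu^\circ_a$ coincides in law with $(X^\circ_t,\,t<\ell^\circ_a)$ under $\mathbb{P}^\circ_0$. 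Since $\mathrm{e}^{v/\alpha}X^\circ_{\mathrm{e}^{-v}}$ is a stationary ergodic Markov process under $\mathbb{P}^\circ_0$ with time-$1$ law equal to that of $X^\circ_1$, the ergodic theorem yields
\[
\lim_{t\to\infty}\frac{\sigma_{\mathrm{e}^{-t}}}{t}=\mathbb{E}^\circ_0\bigl[|X^\circ_1|^{-\alpha}\bigr],\qquad \mathbb{P}_{\nu^\circ_a}\text{-a.s.}
\]
provided the expectation is finite. Combining with the SLLN \eqref{slln} for $N$ (which is invariant under the swap $\rho\leftrightarrow\hat\rho$ that distinguishes the MAP of $X^\circ$ from that of $X$) then gives the limit \eqref{ooo}, $\mathbb{P}_{\nu^\circ_a}$-almost surely, once the target constant is matched.

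Second, I would compute $\mathbb{E}^\circ_0[|X^\circ_1|^{-\alpha}]$ using formula \eqref{omean}. The MAP underlying $(X^\circ,\mathbb{P}^\circ_\cdot)$ drifts to $+\infty$ in its ordinate (since $X^\circ$ is transient), so its dual drifts to $-\infty$ and the exponential functional $I$ is almost surely finite, making the right-hand side of \eqref{omean} meaningful. Plugging in $f(x)=|x|^{-\alpha}$ collapses the $I$-integrals, leaving the explicit value
\[
\mathbb{E}^\circ_0[|X^\circ_1|^{-\alpha}]=\Gamma(-\alpha)\,\frac{\sin(\pi\alpha\rho)+\sin(\pi\alpha\hat\rho)}{\pi}.
\]
Multiplying this by the limit in \eqref{slln} and simplifying with the reflection formula $\Gamma(\alpha)\Gamma(-\alpha)=-\pi/(\alpha\sin(\pi\alpha))$ produces the constant $\sin(\pi\alpha\rho)\sin(\pi\alpha\hat\rho)/(\alpha\pi|\sin(\pi\alpha)|)$, where the absolute value arises because $\sin(\pi\alpha)<0$ for $\alpha\in(1,2)$.

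The main obstacle I expect is the last step: upgrading an almost-sure statement under $\mathbb{P}_{\nu^\circ_a}$ to one under $\mathbb{P}_x$ for \emph{every} $x\neq 0$. The remark-style trick used in parts (i) and (ii) is unavailable here because time-reversal starts from $\tau^{\{0\}}$, not from a deterministic time. I would handle this by showing that $\nu^\circ_a$ is absolutely continuous with respect to Lebesgue measure. By scaling, it suffices to treat $a=1$. Using Lemma \ref{circreverse} together with $\mathbb{P}^\circ_0(|X^\circ_t|\to\infty)=1$ (which follows from the fact that the MAP underlying $X^\circ$ is, up to sign of the ordinate, the one for $X$, hence transient to $+\infty$ in absolute value), one identifies
\[
\int_{(-1,1)}f(y)\,\nu^\circ_1(\mathrm{d}y)=\lim_{|x|\to\infty}\mathbb{E}_x\bigl[f(X_{\tau^{(-1,1)}})\bigr]
\]
for bounded measurable $f$, where $\tau^{(-1,1)}=\inf\{t>0:|X_t|<1\}$. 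Passing to the limit in the explicit formula of \cite{KPW} for the entry distribution of the stable process into $(-1,1)$ produces an explicit Lebesgue density on $(-1,1)$, proving absolute continuity. This gives \eqref{ooo} $\mathbb{P}_x$-almost surely for Lebesgue-almost every $x$. The remaining exceptional $x\neq 0$ are recovered by running $X$ until it first enters an interval around the origin that excludes $x$; the entry law is absolutely continuous by Rogozin's theorem, so the Lebesgue-a.e.\ statement applied at the entry point, together with the strong Markov property and the finiteness of the upcrossings accumulated before entry, delivers the full almost-sure conclusion.
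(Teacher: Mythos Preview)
Your proposal is correct and follows essentially the same route as the paper's own proof: the decomposition $U_{\tau^{\{0\}}-s}=N_{\sigma_s}$, the identification of the reversed process via Lemma~\ref{circreverse}, the ergodic evaluation of $\sigma_{\mathrm{e}^{-t}}/t$ as $\mathbb{E}^\circ_0[|X^\circ_1|^{-\alpha}]$, the explicit computation of this moment via \eqref{omean}, and finally the upgrade from $\mathbb{P}_{\nu^\circ_a}$ to every $\mathbb{P}_x$ by proving absolute continuity of $\nu^\circ_1$ through the \cite{KPW} entry formula and then using Rogozin's entry density to cover the remaining null set. The only cosmetic difference is that the paper quotes \eqref{slln} directly for $N$ (it remains valid for $\alpha\in(1,2)$ without any $\rho\leftrightarrow\hat\rho$ swap, since here one works with the MAP of $X$ itself, not of $X^\circ$), but this does not affect the argument.
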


\section*{Acknowledgements} We would like to thank Victor Rivero for bringing the computations in \cite{Rivero, PPR} to our attention and Tomasz \.{Z}ak for useful comments throughout the paper. %AEK is sponsored by EPSRC grant EP/L002442/1.
%%%%%%%%%%%%%%%%%%%%%%%%%%%%%%%%%%%%%%%%%%%%%%%%%%%%%%%%%%%%%%%%
\vspace{10pt}

%%%%%%%%%%%%%%%%%%%%%%%%%%%%%%%%%%%%%%%%%%%%%%%%%%%%%%%%%%%%%%%%

%%%%%%%%%%%%%%%%%%%%%%%%%%%%%%%%%%%%%%%%%%%%%%%%%%%%%%%%%%%%%%%%
\end{document}